\newtheorem{theorem}{Theorem}
\newtheorem{lemma}[theorem]{Lemma}
\newtheorem{proposition}[theorem]{Proposition}
\newtheorem{corollary}[theorem]{Corollary}
{\theorembodyfont{\rmfamily}%
  \newtheorem{example}[theorem]{Example}
   
\newtheorem{definition}[theorem]{Definition}}
\newenvironment{proof}{\noindent\textit{Proof.}}
{\QED\vskip\theorempostskipamount} 
\newenvironment{proofof}[1]{\noindent\textit{Proof
    \protect{#1}.}}
                       {\QED\vskip\theorempostskipamount}
\def\petitcarre{\vrule height4pt width 4pt depth0pt}
\def\QED{\relax\ifmmode\eqno{\hbox{\petitcarre}}\else{%
  \unskip\nobreak\hfil\penalty50\hskip2em\hbox{}\nobreak\hfil
  \petitcarre
  \parfillskip=0pt \finalhyphendemerits=0\par\smallskip}
  \fi}
\newcommand\A{\mathcal{A}}
\newcommand\cL{\mathcal{L}}
\newcommand{\GD}{\mathcal D}
\newcommand{\GH}{\mathcal H}
\newcommand{\GR}{\mathcal R}
\newcommand{\GL}{\mathcal L}
\newcommand{\N}{\mathbb{N}}
\newcommand{\Z}{\mathbb{Z}}
\def\un(#1){\underline{#1}\,}
\newcommand{\edge}[1]{\stackrel{#1}{\rightarrow}}
\newcommand{\longedge}[1]{\stackrel{#1}{\longrightarrow}}
\DeclareMathOperator{\Card}{Card}
\DeclareMathOperator{\rank}{rank}
\definecolor{ivoire}{rgb}{0.99,0.99,0.8}
\definecolor{light-gray}{gray}{0.7}
\newcounter{hours}\newcounter{minutes}
\numberwithin{equation}{section}
\numberwithin{figure}{section}
\numberwithin{table}{section}
\definecolor{lime}{HTML}{A6CE39}
\DeclareRobustCommand{\orcidicon}{%
	\begin{tikzpicture}
	\draw[lime, fill=lime] (0,0)
	circle [radius=0.16]
	node[white] {{\fontfamily{qag}\selectfont \tiny ID}};
	\draw[white, fill=white] (-0.0625,0.095)
	circle [radius=0.007];
	\end{tikzpicture}
	\hspace{-2mm}
}
\xdef\csname orcid\x\endcsname{\noexpand%
 \href{https://orcid.org/\csname orcidauthor\x\endcsname}{\noexpand\orcidicon}}
\title{Recognizability of morphisms}
\author{Marie-Pierre B\'eal\orcidA{}%
and Dominique Perrin and Antonio Restivo}
\begin{document}

\maketitle
\begin{abstract}
 We investigate several questions related to the notion
  of recognizable morphism.
  The main result is a new proof of Moss\'e's theorem
  and actually of a generalization to a more general class of morphisms
  due to Berth\'e et al. We actually prove the result of Berth\'e et
  al. for the most general class of morphisms, including ones with
  erasable letters.
  Our result is derived from a result concerning elementary morphisms
  for which we also provide a new proof.
  We also prove some new results which allow us to formulate
  the property of recognizability in terms of finite automata.
  We use this characterization to
  show that, for  an injective morphism, the property of being
  recognizable on the full shift for aperiodic points is decidable.
\end{abstract}

\section{Introduction}
In this paper, we study  problems related to
the recognizability of morphisms (also called substitutions). This notion
concerns the unambiguity in the representation
of an infinite sequence $y$ as the image $\sigma(x)$ by a morphism $\sigma$
of another sequence $x$, up to some normalized shift,
called a centered $\sigma$-representation.  It belongs
to a general notion of unambiguity in symbolic
dynamics (see \cite{BealPerrinRestivo2021}).

By Moss\'e's Theorem (\cite{Mosse1992,Mosse1996}), every aperiodic primitive morphism $\sigma$
is recognizable on the shift $X(\sigma)$
(see the precise definitions in Section~\ref{sectionRecognizability}). This surprising
result was initially formulated (in an incorrect 
way) by \cite{Martin1973} (see \cite{Kyriakoglou2019} on the genesis
of the theorem and its possible variants). It was further generalized
by Bezuglyi et al. \cite{BezuglyiKwiatkowskiMedynets2009},
who proved that every aperiodic non-erasing morphism $\sigma$
is recognizable on $X(\sigma)$. Next, it was proved 
by Berth\'e et al. \cite{BertheSteinerThuswaldnerYassawi2019}
that every non-erasing morphism $\sigma$ is recognizable on $X(\sigma)$
for aperiodic points.

The main result of this paper is a generalization of Moss\'e's
Theorem.
\begin{theorem}
  Every morphism $\sigma$ is recognizable on $X(\sigma)$
  for aperiodic points.
\end{theorem}
This means that every aperiodic point  in $X(\sigma)$
has a unique centered $\sigma$-represen\-tation as a shift
of the image by $\sigma$ of some $x\in X(\sigma)$.

We actually prove the result of
\cite{BertheSteinerThuswaldnerYassawi2019} concerning
the recognizability for aperiodic points for the most general class
of morphisms, including ones containing erasable letters. Our proof does not use the fact that
the shift $X(\sigma)$ defined for an aperiodic non-erasing morphism has a finite number of
minimal subshifts (proved and used in
\cite{BezuglyiKwiatkowskiMedynets2009}), or the stronger fact that the set of
languages of points in a shift $X(\sigma)$ defined by a non-erasing morphism is
finite (proved and used in \cite{BertheSteinerThuswaldnerYassawi2019}).
Our
proof relies on the notion of elementary morphism,
due to Ehrenfeucht and Rozenberg \cite{EhrenfeuchtRozenberg1978}.
By a result, proved independently by Karhumaki et al.
\cite{KarhumakiManuchPlandowski2003}
and by Berth\'e et al. \cite{BertheSteinerThuswaldnerYassawi2019}, every
elementary morphism is recognizable for aperiodic points.
We use this result to prove Moss\'e's Theorem (and its generalizations).
This represents actually
also a substantial simplification of the proof, since the
available proofs of Moss\'e's Theorem are all 
more difficult, including the proof in \cite{Kurka2003}
which is essentially reproduced in \cite{DurandPerrin2021},
using improvements from \cite{Kyriakoglou2019}.

We also give a characterization of injective
morphisms recognizable on the full shift in terms of groups
in finite monoids (Theorem \ref{theoremSyntactic2}). This
translation of the property of recognizability in terms 
of finite monoids is a new approach which puts in evidence
the strong link between recognizability and automata theory.

Finally, we give a quadratic-time
algorithm to check whether an
injective morphism is recognizable on the full shift for aperiodic
points (Corollary \ref{corollary.recognizable}).

Note that the notion of recognizability can also be defined for substitutions
in dimension $2$ or higher. This was first considered in
\cite{Mozes1989} where the recognizability property is used to prove
that a $2\text{D}$ substitution shift is sofic. The property was also
used in \cite{AubrunSablik2013} to simulate an effective subshift by a
sofic and also in \cite{AubrunSablik2014}.
A generalization of Moss\'e's Theorem to the $2\text{D}$ case was proved in
\cite{Solomyak1998}. Let us also mention that the recognizability of morphisms has been extended to sequences
of morphisms in \cite{BertheSteinerThuswaldnerYassawi2019} (see also \cite{DonosoDurandMaassPetite2021}).

The paper is organized as follows. After an introductory section
on the basic notions of symbolic dynamics, we formulate
the precise definition of a morphism recognizable
on a shift space and prove some elementary properties
of recognizable morphisms. In Section \ref{sectionElementary}
we introduce elementary morphisms and prove that every
elementary morphism is recognizable for
aperiodic points (Theorem~\ref{theoremIndecomposable}).
In Section~\ref{sectionIterated}, we give the new proof
of Moss\'e's Theorem and its extensions. In Section~\ref{sectionSyntactic},
we formulate a condition characterizing
recognizable injective morphisms in terms of groups
in finite monoids (Theorem~\ref{theoremSyntactic2}).
In Section~\ref{section.efficient}, we show
that all notions handled in this paper are decidable
and, notably, by algorithms of low polynomial complexity.

\numberwithin{theorem}{section}
\section{Symbolic dynamics}
We briefly recall the definitions of symbolic dynamics. For
a more complete presentation, see~\cite{LindMarcus2021}
or the recent~\cite{DurandPerrin2021}.
\subsection{Words}
Let $A$ be a finite alphabet. We denote by $A^*$ the free
monoid on $A$ and by $A^+$ the set of non-empty words on $A$.
The empty word is denoted by $\varepsilon$.
We denote by $|u|$ the \emph{length} of the word $u$.

For a word $w\in A^*$ and $a\in A$, we denote by $|w|_a$
the number of occurrences of $a$ in $w$.

A word $s\in A^*$ is a \emph{factor}
of $w\in A^*$ if $w=rst$. The word $r$ is called a
\emph{prefix} of $w$. It is \emph{proper} if $r\ne w$
(that is, if $st$ is not empty).

For $U\subset A^*$, we denote by $U^*$ the submonoid of
$A^*$ generated by $U$. For $U=\{u\}$, we write $u^*$
instead of $\{u\}^*$.

A set $U\subset A^*$ is a \emph{code} if every
$w\in U^*$ has a unique decomposition in words of $U$.
A \emph{prefix code} is
a set $U\subset A^*$ such that no element of $U$ is a prefix
of another one.

Two words $u,v$ are \emph{conjugate} if $u=rs$ and $v=sr$
for some words $r,s$.

A word is \emph{primitive} if it is not a power of a shorter word.

An integer $p\ge 1$ is a \emph{period} of a word $w=a_0a_1\cdots a_{n-1}$
with $a_i\in A$
if $a_i=a_{i+p}$ for $0\le i\le n-p-1$.
\subsection{Shift spaces}
We consider the set $A^\Z$ of two-sided infinite sequences on $A$.
For $x=(x_n)_{n\in\Z}$, and $i\le j$, we denote
$x_{[i,j]}$ the word $x_ix_{i+1}\cdots x_j$
and $x_{[i,j)}$ the word $x_ix_{i+1}\cdots x_{j-1}$.

The set $A^\Z$ is a compact metric space for the distance defined for $x\ne y$
by $d(x,y)=2^{-r(x,y)}$ with
\begin{displaymath}
r(x,y)=\min\{|n|\mid n\in\Z, x_n\ne y_n\}.
\end{displaymath}
The \emph{shift transformation} $S:A^\Z\to A^\Z$ is defined
by $y=S(x)$ if $y_n=x_{n+1}$ for every $n\in\Z$.
We sometimes denote $Sx$ instead of $S(x)$.

A \emph{shift space} $X$ on a finite alphabet $A$ is a closed
and shift invariant subset of $A^\Z$.

A topological dynamical system is a pair $(X,S)$
of a compact metric space and a continuous map from $X$
to itself. For every shift space $X$, since $S$ is  continuous
and $X$ is compact, the pair $(X,S)$ is
a topological dynamical system.

The \emph{orbit} of a sequence $x\in A^\Z$ is the set
$\{S^n(x)\mid n\in \Z\}$.

A point $x\in A^\Z$ is \emph{periodic} if there is an $n\ge 1$
such that $S^n(x)=x$. Otherwise, it is \emph{aperiodic}.
A periodic point has the form $w^\infty=\cdots ww\cdot ww\cdots$
(the letter of index $0$ of $w^\infty$ is the first letter of $w$).
For $x\in A^\Z$, we denote $x^+=x_0x_1\cdots$, which is an element
of $A^\N$ and $x^-=\cdots x_{-2}x_{-1}$ which is the element $y$ of $A^{-\N}$
defined by $y_{-n}=x_{-n-1}$ for $n\in\N$.
For $x\in A^{-\N}$ and $y\in A^{\N}$, we denote $z=x\cdot y$
the two-sided infinite sequence $z$ such that $z^-=x$ and $z^+=y$.

A word $w$ is a factor of $x\in A^\Z$ if $w=x_ix_{i+1}\cdots x_{j-1}$
for some $i,j\in\Z$ with $i\le j$.

The \emph{language} of a shift space $X$, denoted $\cL(X)$,
is the set of factors of the elements of $X$.
For $x\in A^\Z$, we also denote by $\cL(x)$ the set
of factors of $x$. Thus $\cL(X)=\cup_{x\in X}\cL(x)$.

\subsection{Morphisms}

A morphism $\sigma\colon A^*\to B^*$ is a monoid morphism from $A^*$ to $B^*$.
It is \emph{non-erasing} if $\sigma(a)$ is nonempty for every $a\in
A$.

The \emph{incidence matrix} of a morphism $\sigma\colon A^*\to B^*$
is the $(A\times B)$-matrix $M(\sigma)$ whose row of index
$a$ is the vector $(|\sigma(a)|_b)_{b\in B}$.

For example, if $\sigma\colon a\mapsto ab,b\mapsto ac,c\mapsto a$,
we have
\begin{displaymath}
  M(\sigma)=\begin{bmatrix}1&1&0\\1&0&1\\1&0&0\end{bmatrix}
  \end{displaymath}

A morphism $\sigma$ defines a map from $A^\N$ to $B^* \cup B^\N$
defined by $\sigma(x_0x_1\cdots)=\sigma(x_0)\sigma(x_1)\cdots$,
from $A^{-\N}$ to $B^* \cup B^{-\N}$ defined by
$\sigma(\cdots x_{-1}x_{0})=\cdots\sigma(x_{-1})\sigma(x_0)$
and also a map from $A^\Z$ to $B^* \cup B^\N \cup B^{-\N} \cup B^\Z$
defined by $\sigma(x)=\sigma(x^-)\cdot\sigma(x^+)$.

Let $\sigma\colon A^*\to B^*$ be a morphism whose restriction to
$A$ is injective.
The set $U=\sigma(A)$ is a code if and only if $\sigma$ is injective
on $A^*$.
Note that $\sigma$ can be injective on $A^*$ without being
injective on $A^\Z$ (the converse being obviously true).
It is injective on $A^\Z$
if and only if it is injective on $A^\N$ and on $A^{-\N}$.
\begin{example}
  The Fibonacci morphism $a\mapsto ab,b\mapsto a$ is injective
  on $A^\N$, as one may easily verify.
  The morphism $\sigma\colon a\mapsto a,b\mapsto ab,c\mapsto bb$
  is not injective on $A^\N$ since $\sigma(ac^\omega)=\sigma(bc^\omega)$.
\end{example}

Let $\sigma\colon A^*\to A^*$ be a morphism from $A^*$ to itself.
The \emph{language} of $\sigma$, denoted $\cL(\sigma)$
is the set of factors of the words $\sigma^n(a)$ for
some $n\ge 0$ and $a\in A$.
The \emph{shift defined} by $\sigma$, denoted by $X(\sigma)$
is the set of sequences with all their factors in $\cL(\sigma)$.

One has $\cL(X(\sigma))\subset\cL(\sigma)$
but it is not true in general that $\cL(X(\sigma))=\cL(\sigma)$.
Indeed, the words of $\cL(X(\sigma))$ can always be extended
(we have $awb\in \cL(X(\sigma))$ for some $a,b\in A$)
while this needs not be true of the words of $\cL(\sigma)$.
Note that if $w\in \cL(X(\sigma))$, then $w$ is a factor
of some $\sigma^n(a)$ for all large enough $n$.

An \emph{erasable} letter of a morphism $\sigma\colon A^*\to A^*$ is a letter $a$ in
$A$ such that $\sigma^n(a) = \varepsilon$ for some integer $n$. A word
is \emph{erasable} if it is formed of erasable letters.

If $w$ is an erasable word, then $\sigma^{\Card(A)}(w)=\varepsilon$.
Indeed, set $A_i=\{a\in A\mid \sigma^i(a)=\varepsilon\}$. Then
$A_1\subset A_2\subset\ldots\subset A$ and thus there is $k\le\Card(A)$
such that $A_k=A_{k+1}$, which implies $A_{k+j}=A_{k}$ for all $j\ge 0$.
\begin{proposition}\label{propositionErasing}
  Let $\sigma\colon A^*\to A^*$ be a morphism. The set of erasable words in $\cL(\sigma)$ is finite.
\end{proposition}
We first prove the following well-known lemma
(see, for example, \cite[Lemma 5.5]{Queffelec2010}).
\begin{lemma}\label{lemmaQueffelec}
 For every  word $u\in \cL(\sigma)$, there is some $m\ge 0$
and words $v_i,w_i\in \cL(X)$ for $0\le i\le m$, such that
\begin{equation}
u=v_0\sigma(v_1)\cdots\sigma^{m-1}(v_{m-1})\sigma^m(v_m)\sigma^{m-1}(w_{m-1})\cdots\sigma(w_1)w_0,
\label{eqQueffelec}
\end{equation}
with $|v_i|,|w_i|\le |\sigma|$.
\end{lemma}
\begin{proof}
We use induction on $|u|$. The result is true if $|u|<|\sigma|$
choosing $m=0$ and $v_0=u$.
Otherwise, by definition of $\cL(\sigma)$, there exists a 
word $u'\in \cL(\sigma)$ such that
$u=v_0\sigma(u')w_0$. Choosing $u'\in\cL(\sigma)$ of maximal length, we have moreover 
$|v_0|,|w_0|\le|\sigma|$. By induction hypothesis, we have
a decomposition \eqref{eqQueffelec} for $u'$, that is
\begin{displaymath}
u'=v'_0\sigma(v'_1)\cdots\sigma^{m-1}(v'_{m-1})\sigma^m(v'_m)\sigma^{m-1}(w_{m-1}')\cdots\sigma(w_1')w_0'.
\end{displaymath}
In this way, we obtain
\begin{displaymath}
u=v_0\sigma(u')w_0=v_0\sigma(v'_0)\cdots\sigma^{m}(v'_{m-1})\sigma^{m+1}(v'_m)\sigma^{m}(w'_{m-1})\cdots\sigma(w_0')w_0.
\end{displaymath}
which is of the form \eqref{eqQueffelec}.
\end{proof}
\begin{proofof}{of Proposition~\ref{propositionErasing}}
  Let $u\in\cL(\sigma)$ be erasable. Let $m\ge 0$, $v_i,w_i$ with $|v_i|,|w_i|\le|\sigma|$
  be such that \eqref{eqQueffelec} holds. Since $u$ is erasable,
  all $v_i,w_i$ are erasable. Since $\sigma^{\Card(A)}(w)=\varepsilon$
  for every erasable word $w$, we can assume that $m\le\Card(A)-1$.
  This implies that the length of $u$ is bounded.
\end{proofof}
Note that the maximal length of erasable words in $\cL(\sigma)$
is, according to the proof above, bounded by
$2|\sigma|\sum_{i=0}^{k-1}|\sigma|^i=2|\sigma|(|\sigma|^k-1)/(|\sigma|-1)$
where $k=\Card(A)$.

\begin{corollary}\label{lemmaSigma}
  Let $\sigma\colon A^*\to A^*$ be a morphism. For every $x$ in $X(\sigma)$,
  the sequence $\sigma(x)$ is in $X(\sigma)$. The map
  $x\mapsto\sigma(x)$ is continuous on $X(\sigma)$.
\end{corollary}
\begin{proof}
 We have to prove that $\sigma(x)$ is two-sided infinite.
  By Proposition~\ref{propositionErasing}, $x$ has an infinite number of non-erasable letters on
  the left and on the right and thus $\sigma(x)$ is two-sided infinite.
  Since $\sigma$ is continuous at every point $x$ such that
  $\sigma(x)$ is infinite, it is continuous on $X(\sigma)$.
\end{proof}
Note that, since $X=X(\sigma)$ is compact, and
since $\sigma$ is continuous on $X$
by Corollary \ref{lemmaSigma}, the pair $(X,\sigma)$
is a topological dynamical system. Thus it is both a dynamical system
$(X,S)$ with respect to the shift transformation and
a dynamical system $(X,\sigma)$.

As another corollary of Proposition~\ref{propositionErasing}, we have the
following characterization of the morphisms such that
$\cL(X(\sigma))=\cL(\sigma)$.

\begin{proposition}\label{propositionCaractSubstitution}
  Let $\sigma\colon A^*\to A^*$ be a morphism.
  One has $\cL(\sigma)=\cL(X(\sigma))$
  if and only if every letter $a\in A$ is in $\cL(X(\sigma))$.
  \end{proposition}
\begin{proof}
  Assume that the condition is satisfied. Every $w\in \cL(\sigma)$ is a factor
  of some $\sigma^n(a)$ for $a\in A$ and $n\ge 0$. Let $x\in X(\sigma)$
  be such that $a\in\cL(x)$. By Corollary~\ref{lemmaSigma}, we have
  $\sigma^n(x)\in X(\sigma)$. Since $w\in\cL(\sigma^n(x))$, we obtain
  $w\in \cL(X(\sigma))$. The converse is obvious.
  \end{proof}

  \begin{example}
    The morphism $\sigma\colon a\mapsto ab,b\mapsto abab$ is periodic.
    The closure of $\sigma(A^\Z)$ is $(ab)^\infty\cup(ba)^\infty$.
    \end{example}
A word $w\in A^*$ is \emph{growing}
for $\sigma$ if the sequence $(|\sigma^n(w)|)_n$ is unbounded.
A word is growing if some of its letters is growing. The
morphism $\sigma$ itself is said to be \emph{growing} if all letters
are growing.

Actually, we have the following property of growing letters.
\begin{proposition}\label{lemmaGrowing}
 If
 $a\in A$ is  growing for $\sigma$, then $\sigma^{r\Card(A)}(a)$
 contains for every $r\ge 0$ at least $r+1$ non-erasable letters.
 In particular, $\lim_{n\to\infty}|\sigma^n(a)|=\infty$.

 \end{proposition}
\begin{proof}
  Set $k=\Card(A)$.
  Assume first that $\sigma^k(a)$ contains only one non-erasable letter.
  Then, this letter
  has to be growing.
  Next, by the pigeonhole principle, there are $i,p$ with
  $i+p\le k$ and $p\ge 1$ such that $\sigma^i(a)=ubv$ and $\sigma^p(b)=rbs$
  with $u,v,r,s$ erasable and $b$ a growing letter. But then,
  since $\sigma^{\Card(A)}(r)=\sigma^{\Card(A)}(s)=\varepsilon$, the word
  \begin{displaymath}
    w=\sigma^{p\Card(A))}(b)=\sigma^{p(\Card(A)-1)}(r)\cdots\sigma^p(r)rbs\sigma^p(s)\cdots\sigma^{p(\Card(A)-1)}(s)
  \end{displaymath}
  is a finite
  fixed point of $\sigma^p$, that is, such that
  $\sigma^p(w)=w$, a contradiction with the fact that
  $b$ is growing. This proves the statement for $r=1$.

  Next,
  assume that $\sigma^{rk}(a)$ contains $s\ge r+1$ non-erasable
  letters $a_1,\ldots, a_s$. One of them, say $a_i$, has to be growing.
  Then each of the $\sigma(a_1),\ldots,\sigma(a_s)$ contains
  a non-erasing letter and $\sigma^k(a_i)$ contains at least two
  by the property for $r=1$. Thus $\sigma^{(r+1)k}(a)$ contains at least
  $r+2$ non-erasing letters.

\end{proof}
\section{Recognizable morphisms}\label{sectionRecognizability}
Let $\sigma\colon A^*\to B^*$ be a morphism. A \emph{$\sigma$-representation}
of $y\in B^\Z$ is a pair $(x,k)$ of a sequence $x\in A^\Z$
and an integer $k$  such that
\begin{equation}
  y=S^k(\sigma(x))\label{eqsigmaRep}
\end{equation}
where $S$ denotes the shift transformation.
The $\sigma$-representation $(x,k)$ is \emph{centered}
if $0\le k<|\sigma(x_0)|$.

Note that, in particular, a centered $\sigma$-representation $(x,k)$ is
such that $\sigma(x_0)\ne\varepsilon$. 

Note that if $y$ has a $\sigma$-representation
$(x,k)$, it has a centered $\sigma$-representation $(x',k')$
with $x'$ a shift of $x$.
Indeed, assume $k\ge 0$ (the case $k<0$ is symmetric).
Let $i\ge 0$ be such that $|\sigma(x_0\cdots x_{i-1})|\le k<|\sigma(x_0\cdots x_i)|$. Set
$k'=k-|\sigma(x_0\cdots x_{i-1})|$ and $x'=S^ix$.
Then $S^{k'}\sigma(x')=S^{k'+|\sigma(x_0\cdots x_{i-1})|}\sigma(x)=S^k\sigma(x)=y$
and $0\le k'<|\sigma(x'_0)|$. Thus
$(x',k')$ is a centered $\sigma$-representation of $y$.

For a shift space $X$ on $A$, the set of points in $B^\Z$
having a $\sigma$-representation
$(x,k)$ with $x\in X$ is a shift space on $B$ which is the closure under the shift of $\sigma(X)$. Indeed, if $(x,k)$ is the $\sigma$-representation of $y$,
then $S(y)$ has the $\sigma$-representation $(x',k')$ with
\begin{displaymath}
  (x',k')=\begin{cases}(x,k+1)&\mbox{ if $k+1<|\sigma(x_0)|$}\\
    (S(x),0)&\mbox{ otherwise.}\end{cases}
    \end{displaymath}
\begin{definition}
Let $X$ be a shift space on $A$.
A morphism $\sigma\colon A^*\to B^*$ is \emph{recognizable} on $X$
(resp. recognizable on $X$
for aperiodic points) if for every point $y\in B^\Z$ (resp.
every aperiodic
point $y\in B^\Z$) there is at most one centered $\sigma$-representation $(x,k)$ of $y$
with $x\in X$.
\end{definition}
For $X=A^\Z$, we simply say recognizable instead of recognizable on $X$.
Note that, as an equivalent definition of recognizability on $X$
(resp. recognizability for aperiodic points),
for every $y,y'\in A^\Z$ and $0\le k<|\ |\sigma(y'_0)|-|\sigma(y_0)|\ |$ such that
$\sigma(y)=S^k(\sigma(y'))$ is in $X$
(resp. is an aperiodic point in $X$), one has $k=k'$ and $y=y'$.

A morphism $\sigma\colon A^*\to B^*$ is \emph{circular} if
 it is injective and if for every $u,v\in B^*$
\begin{displaymath}
  uv,vu\in\sigma(A^*)\Rightarrow u,v\in\sigma(A^*).
\end{displaymath}
Note that a circular morphism is non-erasing, since otherwise
it would not be injective.

The following property, originally from
\cite{Restivo1974}, is well known. We will not use it but we
state it for the sake of clarity.
\begin{proposition}
A morphism is recognizable if and only if it is circular.
\end{proposition}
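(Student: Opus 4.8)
The plan is to recognize this as the classical equivalence between recognizability on the full shift and the circular-code property, so that the statement reduces, up to routine translations, to Restivo's theorem (\cite{Restivo1974}). I would first reinterpret $\sigma$-representations in terms of the set $U=\sigma(A)$: for $y\in B^\Z$, a $\sigma$-representation $(x,k)$ is a factorization of the biinfinite word $y$ into blocks $\sigma(x_i)\in U$, the condition $0\le k<|\sigma(x_0)|$ forcing $\sigma(x_0)$ to be exactly the block that contains the letter of index $0$. Hence, once $\sigma$ is injective (so that a factorization determines the underlying sequence of letters), the $\sigma$-representations of $y$ are in bijection with the factorizations of $y$ over $U$, and $\sigma$ is recognizable if and only if it is injective and every biinfinite word over $B$ has at most one factorization over $U$, i.e.\ exactly when $U$ is a circular code. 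The definition of circular morphism used here is then nothing but the characterization of circular codes via the \emph{very pure} condition $uv,vu\in U^{*}\Rightarrow u,v\in U^{*}$, so the two statements are literally the same. (We tacitly assume $\sigma$ nontrivial; the statement is degenerate otherwise.)

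To keep the ``only if'' direction self-contained, I would argue it directly. If $\sigma$ is not injective it is not circular, and it is not recognizable: taking $\bar a\neq\bar b$ with $\sigma(\bar a)=\sigma(\bar b)$ of minimal total length, the first letters of $\bar a$ and $\bar b$ differ, so both are nonempty, their common image $t$ is nonempty in the generic case, $\bar a^{\infty}\neq\bar b^{\infty}$, and $t^{\infty}$ has the two $\sigma$-representations coming from $\bar a^{\infty}$ and $\bar b^{\infty}$ (a minor variant using an erased letter handles $t=\varepsilon$). Now assume $\sigma$ injective, hence non-erasing since $\sigma(a)=\varepsilon$ would give $\sigma(a)=\sigma(\varepsilon)$, and suppose $uv,vu\in\sigma(A^{*})$ with, say, $u\notin\sigma(A^{*})$ and $uv\neq\varepsilon$. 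Pick $\bar a,\bar b$ with $\sigma(\bar a)=uv$, $\sigma(\bar b)=vu$ and set $p=(uv)^{\infty}\in B^\Z$. Then $p=\sigma(\bar a^{\infty})$ gives the $\sigma$-representation $(\bar a^{\infty},0)$, while $(uv)^{\infty}=S^{|v|}\bigl((vu)^{\infty}\bigr)=S^{|v|}\bigl(\sigma(\bar b^{\infty})\bigr)$, after normalizing the shift, gives a $\sigma$-representation $(x',k')$ of $p$ with $x'$ a shift of $\bar b^{\infty}$ and with $k'=0$ precisely when $|v|$ equals one of the partial sums $|\sigma(\bar b_0\cdots \bar b_{i-1})|$. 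Recognizability forces these two representations to coincide, hence $k'=0$, hence $v=\sigma(\bar b_0\cdots\bar b_{i-1})\in\sigma(A^{*})$ and then $u=\sigma(\bar b_i\cdots)\in\sigma(A^{*})$, contradicting $u\notin\sigma(A^{*})$. Thus recognizable implies circular.

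For the converse I would take two $\sigma$-representations $(x,k)$, $(x',k')$ of a point $y$; since $\sigma$ is injective it is non-erasing, so $w=\sigma(x)=S^{m}\sigma(x')$ with $m=k-k'$ (say $m\ge0$) exhibits two factorizations of the biinfinite word $w$ into nonempty words of $U=\sigma(A)$, with cutting-point sets $C$ and $C'$. It is enough to show $C=C'$, for then the blocks delimited by consecutive cuts agree, injectivity of $\sigma$ on $A$ gives $x=x'$, and locating index $0$ gives $k=k'$. The equality $C=C'$ is exactly the statement that the base $U$ of the very pure — hence free — submonoid $\sigma(A^{*})$ is a circular code, so two biinfinite factorizations over $U$ must agree: when $C\cap C'\neq\emptyset$ this follows by looking separately at the induced right- and left-infinite factorizations and combining the code property with the very pure condition, and the case $C\cap C'=\emptyset$ is reduced to a periodic word by a pigeonhole argument over the finite set of codewords. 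This last point — that two a priori unsynchronized factorizations over a circular code must share a cutting point — is the technical heart of the matter and the step I expect to be the main obstacle; it is precisely the content of Restivo's theorem (\cite{Restivo1974}), which I would simply invoke, the rest being bookkeeping of shifts and offsets.
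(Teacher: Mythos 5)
Your proof is correct and follows essentially the same route as the paper, which gives no proof at all and simply cites \cite{Restivo1974}: both reduce the statement to the classical fact that a finite code is circular (equivalently, generates a very pure submonoid) if and only if every biinfinite word admits at most one factorization over it, your write-up merely supplying the routine dictionary between $\sigma$-representations and biinfinite factorizations over $U=\sigma(A)$ together with a direct argument that non-injectivity, or a failure of the very pure condition, destroys recognizability. Your parenthetical caveat is well taken: if every letter of $A$ is erased then $\sigma$ is vacuously recognizable but not circular, so the statement does implicitly assume $\sigma$ nontrivial.
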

\begin{example}
  The Fibonacci morphism $\sigma\colon a\mapsto ab,b\mapsto a$
  is circular and thus recognizable.
\end{example}
\begin{example}
  The Thue-Morse morphism $\sigma\colon a\mapsto ab,b\mapsto ba$ is
  not circular. It is not recognizable since $(ab)^\infty$
  can be obtained as $\sigma(a^\infty)$ and as $S(\sigma(b^\infty))$.
  However, it is recognizable for aperiodic points since
  any sequence containing $aa$ or $bb$ has at most one factorization
  in $\{ab,ba\}$.
\end{example}
\begin{example}\label{example3.4}
  The morphism $\sigma\colon a\to aa,b\mapsto ab,c\mapsto ba$
  is not recognizable for aperiodic points. Indeed,
  every sequence without occurrence of $bb$ has
  two factorizations in words of $\{aa,ab,ba\}$.
  \end{example}

\begin{proposition}\label{propositionRecClosed}
  The family of morphisms recognizable
  for aperiodic points is closed under composition.
  \end{proposition}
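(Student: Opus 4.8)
The plan is to take two morphisms $\sigma\colon A^*\to B^*$ and $\tau\colon B^*\to C^*$ that are recognizable for aperiodic points (that is, recognizable on the full shifts $A^\Z$ and $B^\Z$ for aperiodic points) and to show that the composition $\tau\sigma\colon A^*\to C^*$ is recognizable for aperiodic points. So I fix an aperiodic point $z\in C^\Z$ together with two $\tau\sigma$-representations $(x,k)$ and $(x',k')$ with $x,x'\in A^\Z$, and I aim to prove $(x,k)=(x',k')$.

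The first step is a normalization. Since $z=S^k(\tau\sigma(x))$ is biinfinite, the sequence $\tau\sigma(x)$ is biinfinite, and unfolding the two layers of the composition this forces both $\sigma(x^+)$ and $\sigma(x^-)$ to be infinite; hence $y:=\sigma(x)\in B^\Z$ and $\tau\sigma(x)=\tau(y)\in C^\Z$, and likewise $y':=\sigma(x')\in B^\Z$ with $\tau(y')\in C^\Z$. Moreover $y$ is aperiodic: otherwise $\tau(y)$, being a biinfinite morphic image of a periodic point, is periodic, and then so is $z$, a contradiction. Next I would record an elementary re-indexing fact: for a morphism $\rho\colon D^*\to E^*$, a point $d\in D^\Z$ with $\rho(d)\in E^\Z$, and an integer $m\ge 0$, there are unique $p\ge 0$ and $n$ with $0\le n<|\rho(d_p)|$ and $m=|\rho(d_0\cdots d_{p-1})|+n$, and then $(S^p(d),n)$ is a $\rho$-representation of $S^m(\rho(d))$. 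This is immediate because $p\mapsto |\rho(d_0\cdots d_{p-1})|$ is nondecreasing with limit $+\infty$; in particular $\rho(d_p)$ is automatically nonempty.

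Applying this with $\rho=\tau$, $d=y$, $m=k$ produces $j\ge 0$ and $l$ with $0\le l<|\tau(y_j)|$ such that $(S^j(y),l)$ is a $\tau$-representation of $z$, and $k=|\tau(y_0\cdots y_{j-1})|+l$. The key extra observation is that $k<|\tau\sigma(x_0)|=|\tau(y_0\cdots y_{|\sigma(x_0)|-1})|$ together with the monotonicity of the length function forces $j<|\sigma(x_0)|$, so that $(x,j)$ is itself a $\sigma$-representation of $S^j(y)$. Carrying out the same steps on the primed side yields $(S^{j'}(y'),l')$ a $\tau$-representation of $z$, $(x',j')$ a $\sigma$-representation of $S^{j'}(y')$, and $k'=|\tau(y'_0\cdots y'_{j'-1})|+l'$.

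Now recognizability is invoked twice. Since $z$ is aperiodic, recognizability of $\tau$ applied to the two $\tau$-representations $(S^j(y),l)$ and $(S^{j'}(y'),l')$ of $z$ gives $S^j(y)=S^{j'}(y')=:w$ and $l=l'$, and $w$ is aperiodic because $y$ is. Then recognizability of $\sigma$ applied to the two $\sigma$-representations $(x,j)$ and $(x',j')$ of $w$ gives $x=x'$ and $j=j'$. Finally $y=\sigma(x)=\sigma(x')=y'$, whence $k=|\tau(y_0\cdots y_{j-1})|+l=|\tau(y'_0\cdots y'_{j'-1})|+l'=k'$, so $(x,k)=(x',k')$. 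The only real obstacle is the bookkeeping rather than any deep idea: one must check carefully that $\tau(y_j)$ is nonempty so that $(S^j(y),l)$ is a genuine $\tau$-representation, that the index $j$ stays strictly below $|\sigma(x_0)|$, and that aperiodicity is correctly transported across both morphisms; these points need attention precisely because the statement is meant to cover erasing morphisms, where blocks may collapse to the empty word.
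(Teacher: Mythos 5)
Your proof is correct and takes essentially the same route as the paper's: decompose a representation of $z$ under the composite into a representation under the outer morphism of $z$ and a representation under the inner morphism of the intermediate point, check that the intermediate point is aperiodic, and apply recognizability of each factor. Your extra bookkeeping (nonemptiness of the blocks, $j<|\sigma(x_0)|$, transfer of aperiodicity) just makes explicit what the paper's shorter argument leaves implicit.
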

\begin{proof}
  Let $\sigma=\alpha\circ\beta$ with $\beta\colon A^*\to B^*$ and
  $\alpha\colon B^*\to C^*$.
  Assume that $\alpha,\beta$ are recognizable for aperiodic points.
  Let $z\in C^\Z$ be an aperiodic point.
  Let $(x,k)$ be a centered $\sigma$-representation
  of $z$. Since $0\le k<|\sigma(x_0)|$, there is a decomposition
  $\beta(x_0)=ubv$ with $u,v\in B^*$ and $b\in B$ such that
  $|\alpha(u)|\le k<|\alpha(ub)|$. Set $\ell=|u|$,
  $m=|\alpha(u)|$ and $y=S^{\ell}(\beta(x))$.
  Then $y_0=b$ and $z=S^{k-m}(\alpha(y))$ (see Figure~\ref{figureComposition}).
  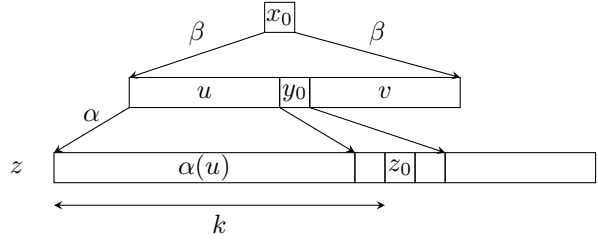
\begin{figure}[hbt]
    \centering
\tikzset{node/.style={draw,minimum size=0.4cm,inner sep=0pt}}
	\tikzset{title/.style={minimum size=0.5cm,inner sep=0pt}}
    \begin{tikzpicture}
      \node[node](z)at(2,2){$x_0$};
      \node[node,minimum width=2cm](u)at(1,1){$u$};
      \node[node](b)at(2.2,1){$y_0$};
      \node[node,minimum width=2cm](v)at(3.4,1){$v$};
      \node[title]at(-1.5,0){$z$};
      \node[node,minimum width=4cm](alphau)at(1,0){$\alpha(u)$};
      \node[node]at(3.2,0){};
      \node[node]at(3.6,0){$z_0$};
      \node[node]at(4,0){};
      \node[node,minimum width=2cm]at(5.2,0){};

      \draw[above,->,>=stealth](1.8,1.8)--node{$\beta$}(0,1.2);
      \draw[above,->,>=stealth](2.2,1.8)--node{$\beta$}(4.4,1.2);
      \draw[above,->,>=stealth](0,.8)--node{$\alpha$}(-1,.2);
      \draw[above,->,>=stealth](2,.8)--node{}(3,.2);
      \draw[above,->,>=stealth](2.4,.8)--node{}(4.2,.2);
      \draw[below,<->,>=stealth](-1,-.5)--node{$k$}(3.4,-.5);
    \end{tikzpicture}
    \caption{The composition $\sigma=\alpha\circ\beta$.}
    \label{figureComposition}
    \end{figure}
  Thus $(y,k-m)$ is a centered $\alpha$-representation of $z$ and
  $(x,\ell)$ is a $\beta$-representation of $y$.
  Conversely, if $(y,n)$ is an $\alpha$-representation of $z$
  and $(x,\ell)$ is a centered $\beta$-representation of $y$,
  then $(x,k)$ with $k=|\beta(y_{-\ell}\cdots y_{-1})|+n$ is
  a centered $\sigma$-representation of $z$. This shows the uniqueness
  of $(x,k)$.
\end{proof}

\section{Elementary morphisms}\label{sectionElementary}
\begin{definition}
A morphism $\sigma\colon A^*\to C^*$ is \emph{elementary}
(or \emph{indecomposable}) if
for every alphabet $B$ and
every pair of morphisms $\alpha\colon B^*\to C^*$ and $\beta\colon A^*\to B^*$
such that $\sigma=\alpha\circ\beta$, one has
$\Card(B)\ge \Card(A)$.
\end{definition}
If $\sigma\colon A^*\to C^*$ is elementary,  one has in
particular $\Card(C)\ge\Card(A)$ and moreover $\sigma$ is non-erasing.

\begin{example}\label{exampleTMelementary}
  The Thue-Morse morphism $\sigma\colon a\mapsto ab,b\mapsto ba$
  is elementary. Indeed, if $\sigma=\alpha\circ\beta$ with
  $\beta\colon \{a,b\}^*\to c^*$, then $ab=\alpha(c^i)$
  and $ba=\alpha(c^j)$ which is impossible.
  \end{example}

The notion of elementary morphism appears for the first time
in \cite{EhrenfeuchtRozenberg1978}.
A sufficient condition for a morphism to be elementary can
be formulated in terms of its incidence matrix.
 \begin{proposition}If
the rank of $M(\sigma)$ is
equal to $\Card(A)$, then
$\sigma$ is elementary.
 \end{proposition}
 \begin{proof} Indeed, if $\sigma=\alpha\circ\beta$
with $\beta\colon A^*\to B^*$ and $\alpha\colon B^*\to C^*$, then
\begin{displaymath}
  M(\sigma)=M(\beta)M(\alpha).
\end{displaymath}
If $\rank(M(\sigma))=\Card(A)$, then
$\Card(A)=\rank(M(\sigma))\le\rank(M(\alpha))\le\Card(B)$.
Thus $\sigma$ is elementary.
\end{proof}

This condition is not necessary.
For example, the Thue-Morse morphism $\sigma\colon a\mapsto ab,b\mapsto ba$
is elementary but its  incidence
matrix
\begin{displaymath}
  M(\sigma)=\begin{bmatrix}1&1\\1&1\end{bmatrix}
\end{displaymath}
 has rank one.

If $\sigma\colon A^*\to C^*$ is a morphism, we define
$\ell(\sigma)=\sum_{a\in A}(|\sigma(a)|-1)$. We say that
a decomposition $\sigma=\alpha\circ\beta$ with
$\alpha\colon B^*\to C^*$ and $\beta\colon A^*\to B^*$
is \emph{trim}
if
 \begin{enumerate}
 \item[\rm(i)] $\alpha$ is non-erasing,
   \item[\rm(ii)] for each
     $b\in B$ there is an $a\in A$ such that $\beta(a)$ contains $b$.
 \end{enumerate}
\begin{proposition}\label{propositionall(sigma)}
  Let $\sigma=\alpha\circ\beta$ with
  $\alpha\colon B^*\to C^*$ and $\beta\colon A^*\to B^*$ be a trim
  decomposition of $\sigma$.
 Then
\begin{equation}
  \ell(\alpha\circ\beta)\ge \ell(\alpha)+\ell(\beta).\label{eqIneqComposition}
\end{equation}
\end{proposition}
\begin{proof}
Set $\sigma=\alpha\circ\beta$. We have
\begin{eqnarray*}
  \ell(\sigma)-\ell(\beta)&=&\sum_{a\in A}(|\sigma(a)|-|\beta(a)|)
  =\sum_{a\in A}\sum_{b\in B}(|\alpha(b)||\beta(a)|_b-|\beta(a)|_b)\\
  &=&\sum_{a\in A}\sum_{b\in B}(|\alpha(b)|-1)||\beta(a)|_b
  =\sum_{b\in B}((|\alpha(b)|-1)\sum_{a\in A}|\beta(a)|_b).
\end{eqnarray*}
Since $\alpha$ is non-erasing, every factor $|\alpha(b)|-1$
is nonnegative.
Since every $b$ appears in some $\beta(a)$, every factor
$\sum_{a\in A}|\beta(a)|_b$ is positive, whence the conclusion.
\end{proof}

The following result is from~\cite{KarhumakiManuchPlandowski2003}.
It also appears in \cite{BertheSteinerThuswaldnerYassawi2019}
with the stronger hypothesis that
$\sigma\colon A^*\to B^*$ is such that $M(\sigma)$ has
rank $\Card(A)$.
\begin{theorem}\label{theoremIndecomposable}
  An elementary morphism 
  is recognizable  for aperiodic points.
\end{theorem}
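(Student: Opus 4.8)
The plan is to show that if an elementary morphism $\sigma\colon A^*\to C^*$ fails to be recognizable for aperiodic points, then one can factor $\sigma$ through a strictly smaller alphabet, contradicting elementarity. So suppose $\sigma$ is not recognizable for aperiodic points: there is an aperiodic $y\in C^\Z$ with two distinct $\sigma$-representations $(x,k)$ and $(x',k')$, i.e. $y=S^k(\sigma(x))=S^{k'}(\sigma(x'))$ with $(x,k)\neq(x',k')$. Since $\sigma$ is non-erasing (as noted right after the definition of elementary), both $\sigma(x)$ and $\sigma(x')$ are genuinely biinfinite, and the two cutting-point sequences they induce on $y$ are either equal or everywhere distinct in the usual sense; in the first case injectivity of $\sigma$ on $A^\Z$ would force $x=x'$ and $k=k'$, so we may assume the two sets of cutting points are disjoint. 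First I would record the resulting \emph{overlap} structure: reading off the blocks of $y$ delimited by the two interleaved cutting-point sequences, one obtains, for each pair $(a,a')\in A\times A$ that occurs as a pair of ``simultaneously active'' letters, a relation expressing how $\sigma(a)$ and $\sigma(a')$ overlap, i.e. a common piece $w$ with $\sigma(a)$ a prefix of $w\sigma(a')$ or vice versa.

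The heart of the argument is a synchronization/pumping step. Because $A$ is finite, the pair $(a,a')$ of currently-active letters together with the current ``phase'' (the length of the overlap between the two factorizations at a cutting point of $x$) takes only finitely many values, so along $y$ some configuration repeats; aperiodicity of $y$ is what prevents this repetition from being an honest period and instead forces the overlap to propagate consistently forever in at least one direction. From this I extract a \emph{bounded} set of ``transition words'': there is a finite alphabet $B$, a morphism $\beta\colon A^*\to B^*$, and a morphism $\alpha\colon B^*\to C^*$ with $\sigma=\alpha\circ\beta$, obtained by cutting each $\sigma(a)$ at the points where the other factorization cuts. The key quantitative point is that the number of distinct pieces produced this way is at most $\ell(\sigma)=\sum_{a\in A}(|\sigma(a)|-1)$ plus $\Card(A)$-type terms — in fact one shows $\ell(\beta)\ge 1$ and $\ell(\alpha)\ge 1$, so by the inequality~\eqref{eqIneqComposition}, $\ell(\sigma)\ge\ell(\alpha)+\ell(\beta)>\ell(\beta)$, which forces $\Card(B)<$ something — and, more to the point, one arranges the construction so that $\Card(B)<\Card(A)$ strictly. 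That contradicts the definition of elementary, completing the proof.

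The step I expect to be the main obstacle is making the overlap propagation rigorous enough to yield a genuine factorization $\sigma=\alpha\circ\beta$ through an alphabet of size $<\Card(A)$, rather than merely through some convenient alphabet: one must verify (i) that the pieces cut from the $\sigma(a)$ can be named by a common alphabet $B$ consistently across all letters $a$, (ii) that $\beta$ so defined is a well-defined morphism (the decomposition of $\sigma(a)$ into pieces must not depend on context — here is where circularity-type arguments and the finiteness of the configuration set are used), and (iii) that at least one letter $a$ has $|\beta(a)|\ge 2$ while $|B|<|A|$, i.e. the ``merging'' of pieces is strict. For (iii) the cleanest route is to first reduce to the case where $\sigma$ is \emph{injective} and has no letter $a$ with $\sigma(a)$ a prefix or suffix of another $\sigma(a')$ (handling those degenerate situations directly by small-alphabet factorizations), and then to observe that a nontrivial overlap relation $\sigma(a)u=\sigma(a')$ or $\sigma(a)=u\sigma(a')v$ with $u,v$ nonempty pieces is exactly what produces a letter of $B$ shared between $\beta(a)$ and $\beta(a')$, shrinking the alphabet. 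Assembling these pieces — using Proposition~\ref{propositionRecClosed} to pass freely between $\sigma$ and its iterates when convenient, and Proposition~\ref{propositionMeager} to dispose of the periodic case — yields the theorem.
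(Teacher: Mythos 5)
Your overall strategy---assume an aperiodic point with two distinct $\sigma$-representations and derive a factorization of $\sigma$ through an alphabet of size $<\Card(A)$---is not what the paper does, and more importantly the central step does not work as stated. The claim that the overlap structure can be ``arranged'' to yield $\sigma=\alpha\circ\beta$ with $\Card(B)<\Card(A)$ in one step is unsubstantiated, and the mechanism you offer for it is demonstrably insufficient. The Thue--Morse morphism $a\mapsto ab$, $b\mapsto ba$ is elementary, and it does exhibit nontrivial overlaps between the two factorizations of $(ab)^\infty$; cutting each image at the other factorization's cut points produces the pieces $a$ and $b$, i.e.\ an intermediate alphabet of size $2=\Card(A)$, with no shrinkage. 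So ``a nontrivial overlap relation produces a shared letter of $B$, shrinking the alphabet'' is false: shared letters do not by themselves make $\Card(B)<\Card(A)$. Also, inequality~\eqref{eqIneqComposition} bounds $\ell(\alpha)+\ell(\beta)$ by $\ell(\sigma)$ and says nothing about $\Card(B)$, so the quantitative part of your argument does not ``force $\Card(B)<$ something.'' The role of aperiodicity is gestured at (``prevents this repetition from being an honest period'') but never converted into a usable statement; this is exactly the point where the proof is hard.

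The paper's proof is structured quite differently and you should compare. It proceeds by induction on $\ell(\sigma)=\sum_{a}(|\sigma(a)|-1)$. From the two representations it first factors $\sigma=\sigma_1\circ\tau_1$ through the \emph{larger} alphabet $A\cup\{a''\}$ by splitting $\sigma(a')=vw$ at the offset; this converts the two-sided overlap into a failure of injectivity of $\sigma_1$ on one-sided sequences. Linna's lemma (Lemma~\ref{lemmaLinna}) then gives $\sigma_1=\sigma_2\circ\tau_2$ through an alphabet $A_2$ with $\Card(A_2)<\Card(A)+1$, i.e.\ $\Card(A_2)\le\Card(A)$. The crucial point is that equality $\Card(A_2)=\Card(A)$ can and does occur for elementary morphisms (Thue--Morse again), so one cannot conclude non-elementarity there; instead the paper observes that $\sigma_2$ and $\tau_2\circ\tau_1$ are elementary with strictly smaller $\ell$ (when $\ell(\sigma_2)>0$), applies the induction hypothesis to each, and concludes by closure of recognizability under composition (Proposition~\ref{propositionRecClosed}); the remaining case $\ell(\sigma_2)=0$ makes $\sigma$ left marked and is handled by Proposition~\ref{lemmaLeftMarked}. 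If you want to salvage your approach, you would essentially have to reprove Linna's lemma and then discover that the ``equal alphabet size'' case still needs an inductive argument---at which point you have reconstructed the paper's proof.
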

We first prove Theorem~\ref{theoremIndecomposable} in a particular case.
A morphism $\sigma\colon A^*\to B^*$ with no erasable letter is \emph{left marked} if
every word $\sigma(a)$ for $a\in A$ begins with a distinct letter.
Symmetrically, it is \emph{right marked} if every word $\sigma(a)$
for $a\in A$ ends with a distinct letter.
In particular, if $\sigma$ is left marked,
$\sigma$ is injective on $A$ and $\sigma(A)$
is a prefix code. It is clear that a marked morphism is elementary.

The following statement, which is a particular case of Theorem~\ref{theoremIndecomposable} appears in \cite{BertheSteinerThuswaldnerYassawi2019}
(Lemma 3.3). We give an independent proof.
\begin{proposition}\label{lemmaLeftMarked}
If $\sigma\colon A^*\to B^*$ is left marked, then it is recognizable for
aperiodic points.
\end{proposition}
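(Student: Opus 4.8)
The statement to prove is that a left marked morphism $\sigma\colon A^*\to B^*$ is recognizable for aperiodic points. Since $\sigma$ left marked means $\sigma(A)$ is a prefix code and $\sigma$ is injective on $A$, the natural approach is to show that any $\sigma$-representation of an aperiodic point is forced, letter by letter, by the prefix-code structure. The plan is: suppose $y\in B^\Z$ is aperiodic and admits two $\sigma$-representations $(x,k)$ and $(x',k')$, so that $S^k(\sigma(x)) = y = S^{k'}(\sigma(x'))$ with $0\le k<|\sigma(x_0)|$ and $0\le k'<|\sigma(x'_0)|$. I want to deduce $k=k'$ and $x=x'$.

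First I would set up the picture of the two ``tilings'' of $\ldots y_{-1}\cdot y_0 y_1\ldots$ by blocks $\sigma(x_n)$ and $\sigma(x'_n)$, aligned so that in the first the block boundary at position $0$ occurs $k$ letters before an occurrence of a $\sigma$-block start. Concretely, there is a biinfinite increasing sequence of ``cut points'' $(c_n)$ for the $x$-decomposition with $c_0 = -k$ (so $y[c_n - c_0, \ldots) $ reads $\sigma(x_n)\sigma(x_{n+1})\cdots$ shifted appropriately), and similarly cut points $(c'_n)$ for $x'$. It suffices to show the two sets of cut points $\{c_n\}$ and $\{c'_m\}$ coincide as subsets of $\Z$: once they do, the block between consecutive cut points is the same word, and since $\sigma$ is injective on $A$ each such block determines a unique letter, giving $x = x'$ (after re-indexing) and $k = k'$. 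To show the cut-point sets coincide I would argue by contradiction: suppose some cut point of one decomposition lies strictly between two consecutive cut points of the other. Then at that interior cut point, a block $\sigma(a)$ of the second decomposition starts strictly inside a block $\sigma(b)$ of the first; but $\sigma(a)$ and the suffix of $\sigma(b)$ starting there both begin at the same position of $y$, and one is a prefix of the other (whichever block ends first). Using the prefix-code / left-marked property I propagate this misalignment forward: the shorter block's end is an interior point of the longer block, and reading on, every block of one decomposition is nested inside or straddles blocks of the other in a way that is eventually periodic.

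The key combinatorial step — and the one I expect to be the main obstacle — is to turn ``the two cut-point sequences are misaligned'' into periodicity of $y$, contradicting aperiodicity. The mechanism: if the cut points never realign, then the function sending each $x$-cut point to the largest $x'$-cut point not exceeding it has bounded ``defect'' (bounded by $\max_a |\sigma(a)|$), so some defect value $d\ne 0$ recurs, say at cut points $c_i$ and $c_j$ with $c_i < c_j$ and both having the same offset relative to the $x'$-decomposition; then the word $y[c_i - c_0, c_j - c_0)$, when shifted by $c_j - c_i$, re-reads identically in \emph{both} decompositions simultaneously, and iterating this shift in both directions forces $y$ to have period $c_j - c_i$. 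This is essentially a pigeonhole argument on pairs (position in $x$-block boundary structure, position in $x'$-block boundary structure), both of which range over finite sets; a recurrence of such a pair forces a shift under which $y$ is invariant, hence $y$ periodic. Care is needed to handle the biinfinite setting cleanly and to make sure the recurring configuration genuinely yields a nontrivial shift (this is where $d\ne 0$, i.e. genuine misalignment, is used). Once the contradiction is reached, the cut points coincide and the conclusion $k=k'$, $x=x'$ follows immediately from injectivity of $\sigma$ on $A$ as above.

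An alternative, perhaps cleaner, route avoids explicit pigeonhole on $y$ and instead mimics the classical proof that a prefix code is ``uniformly synchronizing enough'': read $y^+ = y_0 y_1 \cdots$ forward. In the left marked case, the parsing of a one-sided sequence into $\sigma$-blocks, \emph{given the position of one block boundary}, is completely deterministic — the first letter after any boundary dictates which $\sigma(a)$ is consumed next. So the only freedom is the position $c_0$ of the initial boundary, which lies in a window of size $|\sigma(x_0)| \le \max_a|\sigma(a)|$. If two distinct starting offsets $c_0 \ne c'_0$ both lead to valid biinfinite parsings of the \emph{same} $y$, then comparing them on a long block and using that both are consistent and left marked again forces, via the finiteness of the possible ``(offset of $x$-boundary, offset of $x'$-boundary)'' states, a repetition, hence a period of $y$. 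I would present whichever of these is shorter; the content is the same, and in both the crux is: misaligned deterministic parsings of a biinfinite word, combined with finiteness of block lengths, force periodicity.
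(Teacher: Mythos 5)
Your high-level strategy --- pigeonhole on the finitely many relative configurations of the two parsings of an aperiodic $y$, forcing periodicity --- is the same as the paper's, but the pivotal step is asserted rather than proved, and as stated it does not hold. You claim that if the numeric defect $d\ne 0$ (the offset between an $x$-cut point and the nearest $x'$-cut point) recurs at $c_i$ and $c_j$, then the word between them ``re-reads identically in both decompositions'' after the shift by $c_j-c_i$, so $y$ has that period. Recurrence of the offset only says the two boundary patterns sit in the same relative position at two places; it says nothing about the letters of $y$ in between, so invariance of $y$ under the shift does not follow from it. The determinism you invoke (``the first letter after a boundary dictates which $\sigma(a)$ is consumed next'') is determinism of the parse \emph{given} $y$; what the argument needs is the converse: determinism of $y$ given the misalignment state. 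That is the content of the paper's key observation, which you are missing: for a left-marked $\sigma$, a nonempty overhang word $p$ (a proper prefix of a codeword) is a prefix of a \emph{unique} codeword $u=ps$, and there is at most one letter $a$ such that $p\sigma(a)$ again ends in a proper prefix of a codeword --- the first letter of $s$ forces $a$. Hence the overhang word $p_n$ determines $x_n$, hence the next block $\sigma(x_n)$ of $y$, hence $p_{n+1}$; a biinfinite path in this finite graph of out-degree at most one must be a cycle, and only then does $y$ inherit a period. Note that the state must be the overhang \emph{word}, not merely its length $d$, for the pigeonhole to feed into this forcing.

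Two secondary issues. Your reduction ``cut-point sets coincide $\Rightarrow k=k'$ and $x=x'$'' is fine, but you do not treat the case where the two parsings realign on a half-line (misaligned only for $n<n_0$); the paper handles this by first proving that $\sigma$ is injective on $A^{-\N}$, via a minimal-counterexample argument on words $u,v,w$ with $u,vu,vw,wv\in U^*$ and $v\notin U^*$ (Lemma~\ref{lemmaLeftDelay}), which your proposal does not address. Your ``alternative route'' has exactly the same gap as the main one: finiteness of the set of offset pairs plus recurrence does not yield a period of $y$ without the forcing property above.
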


We fist show the following elementary lemma.
\begin{lemma}\label{lemmaLeftDelay}
  Let $\sigma\colon A^*\to B^*$ be an injective morphism and let $U=\sigma(A)$.
  If $\sigma$
  is not injective on $A^{\N}$, there exist words $u,v,w$ such that
  \begin{equation}
    u,uv,vw,wv\in U^*,\mbox{ and }v\notin U^*.\label{equvw}
  \end{equation}
\end{lemma}
\begin{proof}
  By the hypothesis, there exist $u_0,u_{1},\ldots$ and $u'_0,u'_{1},\ldots$
  with $u_i,u'_i\in U$ such that $u_0u_{1}\cdots =u'_0u'_{1}\cdots $.
  We may assume that $u_0\ne u'_0$. For every $n\ge 0$, there
  is $v_n$ and $n'\ge 0$ such that $u_0\cdots u_n = u'_0\cdots u'_{n'}v_n$.
  Since $\sigma$ is injective, we have $v_n\notin U^*$.
  We may moreover assume that $v_n$ is a prefix of $u'_{n'+1}$.
  Let $n<m$ be such that $v_n=v_m$. Set $u'_{n'+1}\cdots u'_{m'}=v_nw$
  (see Figure~\ref{figureuvw}). Then $u=u'_0\cdots u'_{n'}$,
  $v=v_n=v_m$ and $w$ satisfy \eqref{equvw}.
\end{proof}
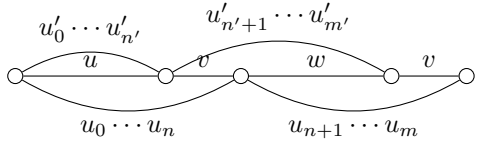
\begin{figure}[hbt]
\centering
\tikzset{node/.style={circle,draw,minimum size=0.2cm,inner sep=0pt}}
\tikzset{title/.style={minimum size=0.5cm,inner sep=0pt}}

\begin{tikzpicture}
  \node[node](0)at(0,0){};\node[node](1)at(2,0){};
  \node[node](2)at(3,0){};\node[node](3)at(5,0){};
  \node[node](4)at(6,0){};

  \draw[above](0)edge node{$u$}(1);
  \draw[bend left,above](0)edge node{$u'_0\cdots u'_{n'}$}(1);
  \draw[above](1)edge node{$v$}(2);
  \draw[bend right,below](0)edge node{$u_0\cdots u_{n}$}(2);
  \draw[bend left,above](1)edge node{$u'_{n'+1}\cdots u'_{m'}$}(3);
  \draw[above](2)edge node{$w$}(3);
  \draw[bend right,below](2)edge node{$u_{n+1}\cdots u_{m}$}(4);
  \draw[above](3)edge node{$v$}(4);
  \end{tikzpicture}
\caption{The words $u,v,w$.}\label{figureuvw}
  \end{figure}
\begin{proofof}{of Proposition~\ref{lemmaLeftMarked}}
  Set $U=\sigma(A)$. Since $U$ is a prefix code,
  $\sigma$ is injective on $A^\N$.
  If it is not injective on $A^{-\N}$, by the symmetric version
  of Lemma~\ref{lemmaLeftDelay}, there are words $u,v,w$
  such that $u,vu,vw,wv\in U^*$ but $v\notin U^*$.
  We may assume that $u,v,w$ are chosen of minimal length.
  Since $\sigma$ is left marked, either $u$ is a prefix of $w$
  or $w$ is a prefix of $u$. In the first case, set $w=uw'$.
  Then since $U$ is prefix, $u,wv=uw'v\in U^*$ imply $w'v\in U^*$.
  But then $(vu)(w'v)(u)=(vuw')(vu)$, a contradiction.
  Finally, if $u=wu'$, then $vw,vu=vwu'\in U^*$ imply $u'\in U^*$,
  and we may replace $u,v,w$ by $u',w,v$, a contradiction again.
  Thus $\sigma$ is injective on $A^{-\N}$.

  Assume now that $y\in B^\Z$ has two distinct centered $\sigma$-representations
  $(x,k)$ and $(x',k')$. We may assume $k=0$. Since
  $\sigma$ is injective on $A^{-\N}$, we have $k'\ne 0$.
  We will prove that $y$ is periodic.
  
  Let $P$ be the set of proper prefixes
  of $U$. For $p\in P$ and $a\in A$, there is at most one $q\in P$
  such that $p\sigma(a)\in U^*q$. We denote $q=p\cdot a$.
  Let $p_0=y_{-k'}\cdots y_{-1}$. Since $y=\sigma(x)=S^{k'}(\sigma(x'))$,
  we have
  \begin{eqnarray*}
    \sigma(\cdots x'_{-2}x'_{-1})p_0&=&\sigma(\cdots x_{-1})\\
p_0\sigma(x_0x_1\cdots)&=&\sigma(x'_0x'_1\cdots)
  \end{eqnarray*}
  As a consequence, there exists for each $n\in\Z$ a word $p_n\in P$
  such that $p_n\cdot x_n=p_{n+1}$.  Since $\sigma$ is left
  marked, there is for every nonempty $p\in P$ at most
  one $a\in A$ such that $p\cdot a$ is in $P$. But all $p_n$
  are nonempty. This is clear if $n<0$ since otherwise $p_0$
  is also empty. For $n>0$, we have 
  \begin{eqnarray*}
    \sigma(\cdots x'_{i_n})p_n&=&\sigma(\cdots x_n),\\
p_n\sigma(x_{n+1}\cdots)&=&\sigma(x'_{i_n+1}\cdots).
  \end{eqnarray*}
  and thus, since $\sigma$ is
  injective on $A^\Z$, $p_n=\varepsilon$ implies that $x=x'$.
  Thus $x$ is periodic and $y$ is also periodic.

  Consider the labeled graph with $P$ as set of vertices
  and edges $(p,a,q)$ if $p\cdot a=q$.
  By what we have seen,  the path $\cdots p_n\edge{x_n}p_{n+1}\cdots$
  is a cycle and thus  $y$ is periodic.
\end{proofof}
The graph used in the last part of the proof will appear
again in Section~\ref{section.efficient}.
\begin{example}
  The Thue-Morse morphism $\sigma\colon a\to ab,b\to ba$
  is left marked. Thus it is recognizable for aperiodic points (as
  we have already seen). The graph used in the proof of
  Proposition~\ref{lemmaLeftMarked} is represented
  in Figure~\ref{figureGraphMorse}.
  \begin{figure}[hbt]
\centering
\tikzset{node/.style={circle,draw,minimum size=0.4cm,inner sep=0pt}}
\tikzset{title/.style={minimum size=0.5cm,inner sep=0pt}}
\tikzstyle{every loop}=[->,shorten >=1pt,looseness=8]
\tikzstyle{loop above}=[in=50,out=130,loop]
\tikzstyle{loop below}=[in=-50,out=-130,loop]
\begin{tikzpicture}
  \node[node](0)at(0,1){$\varepsilon$};
  \node[node](a)at(2,1.5){$a$};\node[node](b)at(2,.5){$b$};

  \draw[below,->](0)edge[loop below]node{$b$}(0);
  \draw[above,->](0)edge[loop above]node{$a$}(0);
  \draw[above,->](a)edge[loop above]node{$b$}(a);
  \draw[below,->](b)edge[loop below]node{$a$}(b);
  \end{tikzpicture}
\caption{The graph associated with the Thue-Morse morphism.}\label{figureGraphMorse}
  \end{figure}
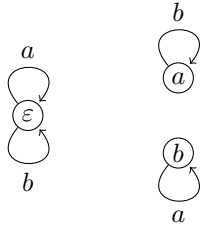
  \end{example}
The proof of Theorem~\ref{theoremIndecomposable} also uses the following
statement, originally due to \cite{Linna1977}
(see also \cite{BerstelPerrinPerrotRestivo1979} and \cite[Chapter
6]{Lothaire2002}).

\begin{proposition}\label{lemmaLinna}
  If a morphism $\sigma\colon A^*\to C^*$ is not injective on $A^\N$,
  there is a trim decomposition $\sigma=\alpha\circ\beta$ with
  $\alpha\colon B^*\to C^*$ and $\beta\colon A^*\to B^*$ such that
  $\alpha$ is injective on $B^\N$, $\Card(B)<\Card(A)$ and 
  every  $b\in B$ appears as the first letter of $\beta(a)$
  for some $a\in A$.
\end{proposition}
\begin{proof}
  Assume first that $\sigma$ is non-erasing.
  We use an induction on $\ell(\sigma)$. If $\ell(\sigma)=0$, set
  $B=\sigma(A)$. Let $\alpha$ be the identity on $B$ and let $\beta=\sigma$.
  All conditions are clearly satisfied.

  Assume now the statement true
  for $\ell<\ell(\sigma)$. Since $\sigma$ is not injective on $A^\N$,
  we have $\sigma(a_0a_1\cdots)=\sigma(a'_0a'_1\cdots)$ for some
  $a_i,a'_i\in A$ with $a_0\ne a'_0$.
  We can assume  that $\sigma(a_0)$ is a prefix of
  $\sigma(a'_0)$. Set $\sigma(a'_0)=\sigma(a_0)v$. If $v$ is empty,
  set $B=A\setminus \{a_0\}$. Let $\alpha$ be the restriction of $\sigma$ to
  $B$ and let $\beta$ be defined by $\beta(a'_0)=a_0$ and $\beta(a)=a$
  for $a\ne a'_0$. Clearly $\sigma=\alpha\circ\beta$ and all
  conditions are satisfied.

  Next, assume the $v$ is nonempty. Define $\alpha_1:A^*\to C^*$
  by $\alpha_1(a'_0)=v$ and $\alpha_1(a)=\sigma(a)$ for $a\ne a'_0$.
  Next, define $\beta_1:A^*\to A^*$ by $\beta_1(a'_0)=a_0a'_0$ and $\beta_1(a)=a$
  for $a\ne a'_0$. Then $\sigma=\alpha_1\circ\beta_1$ since
  \begin{displaymath}
    \alpha_1\circ\beta_1(a'_0)=\alpha_1(a_0a'_0)=\sigma(a_0)v=\sigma(a'_0)
    \end{displaymath}
  and $\alpha_1\circ\beta_1(a)=\alpha_1(a)=\sigma(a)$ if $a\ne a'_0$.
  The morphism $\beta_1$ is injective on $A^\N$ because no
  word in $\beta_1(A)$ begins with $a'_0$. Thus $\alpha_1$ is not
  injective on $A^\N$. By Equation \eqref{eqIneqComposition},
  we have $\ell(\alpha_1)<\ell(\sigma)$. By induction hypothesis,
  we have a decomposition $\alpha_1=\alpha_2\circ \beta_2$ for
  $\beta_2:A^*\to B^*$ and $\alpha_2:B^*\to C^*$
  with $\Card(B)<\Card(A)$, the morphism $\alpha_2$ being injective
  on $B^\N$ and every letter $b\in B$ appearing as initial
  letter in the word $\beta_2(a)$ for some $a\in A$.
  Note that, since $\alpha_1$ is non-erasing, $\beta_2$
  is non-erasing.
  
  Set $\beta=\beta_2\circ\beta_1$. Since $\beta_1,\beta_2$ are
  non-erasing, $\beta$ is non-erasing.
  Then $\sigma=\alpha_1\circ\beta_1=\alpha_2\circ\beta_2\circ\beta_1=\alpha_2\circ\beta$. The decomposition $\sigma=\alpha_2\circ\beta$
  satisfies all the required conditions. Indeed, let $b \in B$.
  Then there is $a \in  A$ such that $b$ is the first letter of $\beta_2(a)$.
  If $a \ne a_0$, we have $\beta_1(a) = a$ and thus $b$
  is the first letter of $\beta(a)$.
  Suppose next that $a=a'_0$. Since
  $\sigma(a_0a_1\cdots)=\sigma(a'_0a'_1\cdots)$ and since $\alpha_2$
  is injective on $B^\N$, we have
  $\beta(a_0a_1\cdots)=\beta(a'_0a'_1\cdots)$.
  Since $\beta_1(a_0)=a_0$ and
  $\beta_1(a'_0)=a_0a'_0$, we obtain
  $\beta_2(a_0)\beta(a_1\cdots)=\beta_2(a_0a'_0)\beta(a'_1\cdots)$ and thus
  \begin{displaymath}
    \beta(a_1\cdots)=\beta_2(a'_0)\beta(a'_1\cdots),
  \end{displaymath}
  showing, since $\beta$ is non-erasing,
  that $b$ is the initial letter of $\beta(a_1)$.

  Consider now a morphism $\sigma$
  such that the set $B=\{a\in A\mid \sigma(a)\ne\varepsilon\}$
  is strictly contained in $A$. Let $\beta\colon A^*\to B^*$
  be defined by $\beta(a)=a$ if $a\in B$ and $\beta(a)=\varepsilon$
  otherwise. Let $\alpha$ be the restriction of $\sigma$ to $B^*$.
  Then $\sigma=\alpha\circ\beta$ and $\alpha$ is non-erasing.
  If $\alpha$ is injective on $B^\N$, we are done.
  Otherwise, by the first part of the proof,
  we have $\alpha=\alpha_1\circ\beta_1$ with $\alpha_1\colon B_1^*\to C^*$
  and $\beta_1\colon B^*\to B_1^*$ with $\alpha_1$ injective on
  $B_1^\N$, $\Card(B_1)<\Card(B)$ and every $b_1\in B_1$
  appears as the first letter of some $\beta_1(b)$.
  Then the decomposition $\sigma=\alpha_1\circ(\beta_1\circ\beta)$
  satisfies all the conditions.
\end{proof}

\begin{example}
  Set $A=C=\{a,b,c\}$.
  The morphism $\sigma\colon a\mapsto ab$, $b\mapsto abc$, $c\mapsto cc$
  is not injective
  on $A^\N$ because $\sigma(ac^\omega)=\sigma(bc^\omega)=abc^\omega$.
  The decomposition $\sigma=\alpha\circ\beta$ with
  $\alpha\colon u\mapsto ab,v\mapsto c$ and $\beta\colon a\mapsto u,b\mapsto uv,c\mapsto vv$
  satisfies the conditions of Proposition~\ref{lemmaLinna}.
\end{example}
By a symmetric version of Proposition~\ref{lemmaLinna}, an elementary morphism
$\sigma\colon A^*\to A^*$ is injective on $A^{-\N}$. Since
a morphism which is injective on $A^\N$ and on $A^{-\N}$
is injective on $A^\Z$, we obtain the following corollary of Proposition~\ref{lemmaLinna}.
\begin{corollary}\label{propositionInjectiveZ}
  An elementary morphism $\sigma\colon A^*\to C^*$ is injective
  on $A^\Z$.
  \end{corollary}

\begin{proofof}{of Theorem~\ref{theoremIndecomposable}}
  Let $\sigma\colon A^*\to B^*$ be an elementary morphism.
We use an induction on $\ell(\sigma)$.
Since $\sigma$ is elementary, it has no erasable letter
and the minimal possible value of $\ell(\sigma)$ is
$0$. In this case, $\sigma$ is a bijection
from $A$ to $B$ and thus it is recognizable.

Assume now that $\sigma$ is not recognizable on aperiodic points.
Thus there exist $x,x'\in A^\N$ and $w$ with $0<|w|<|\sigma(x_0)|$
such that $\sigma(x)=w\sigma(x')$ for some proper suffix $w$ of $\sigma(a')$.
Set $\sigma(a')=vw$. We can then write $\sigma=\sigma_1\circ\tau_1$
with $\tau_1\colon A^*\to A_1^*$ and $\sigma_1\colon A_1^*\to B^*$ and $A_1=A\cup\{a''\}$ where $a''$ is a new letter. We have
$\tau_1(a')=a'a''$ and $\tau_1(a)=a$ otherwise. Next
$\sigma_1(a')=v$, $\sigma_1(a'')=w$ and $\sigma_1(a)=\sigma(a)$ otherwise.
Since $\ell(\tau_1)>0$, we have  $\ell(\sigma_1)<\ell(\sigma)$
by Equation~\eqref{eqIneqComposition}.

Since $\tau_1$ is injective on $A^\N$,
$\sigma_1$ is not injective on $A_1^\N$.
By Proposition~\ref{lemmaLinna}, we can write $\sigma_1=\sigma_2\circ\tau_2$
with $\sigma_2:A_2^*\to B^*$ and $\tau_2:A_1^*\to A_2^*$
for some alphabet $A_2$ such that $\Card(A_2)<\Card(A_1)$
and  that every letter $c\in A_2$
appears as the first letter of some $\tau_2(a)$ for $a\in A_1$.
Then, by Equation~\eqref{eqIneqComposition}, we have
\begin{equation}
  \ell(\sigma_1)\ge \ell(\sigma_2)+\ell(\tau_2).\label{ineq2}
  \end{equation}

If $\Card(A_2)<\Card(A)$, then $\sigma$ is not elementary and we are done.
Otherwise, we have $\Card(A_2)=\Card(A)$. We may also assume
that $\sigma_2$ and $\tau_2\circ \tau_1$ are elementary
since otherwise $\sigma$ is not elementary.

Since $\sigma_2$ is elementary and since $\ell(\sigma_2)\le\ell(\sigma_1)<\ell(\sigma)$, by the induction hypothesis, $\sigma_2$ is recognizable for
aperiodic points.

Since $\sigma=\sigma_2\circ\tau_2\circ\tau_1$, we have also
\begin{equation}
  \ell(\sigma)\ge \ell(\sigma_2)+\ell(\tau_2\circ\tau_1).\label{ineq3}
  \end{equation}

Thus, if $\ell(\sigma_2)>0$,  the
inequality $\ell(\tau_2\circ\tau_1)<\ell(\sigma)$ holds. Since
$\tau_2\circ\tau_1$ is elementary, we obtain that
$\tau_2\circ\tau_1$ is recognizable for aperiodic points by
induction hypothesis. Thus, by Proposition~\ref{propositionRecClosed}
that $\sigma$ is recognizable for aperiodic points.

Let us finally assume that $\ell(\sigma_2)=0$. Then
$\sigma$ is left marked because $\sigma_2$
is a bijection from $A_2$ onto $B$ and every letter
of $A_2$ appears as initial of $\tau_2(a)$ for $a\in A_1$.
We obtain the conclusion by
Proposition~\ref{lemmaLeftMarked}.
\end{proofof}
\section{Recognizability and iterated morphisms}\label{sectionIterated}
In this section, we address the problem of the recognizability
of morphisms $\sigma\colon A^*\to A^*$ on the shift $X(\sigma)$.

The following is proved in~\cite[Proposition
5.10]{BezuglyiKwiatkowskiMedynets2009} for non-erasing morphisms.
We give a proof which holds for morphisms with erasable letters.
\begin{theorem}\label{propositionBKM}
  Let $\sigma\colon A^*\to A^*$ be a morphism. Every point $y$ in $X(\sigma)$
  has a $\sigma$-representation $y=S^k(\sigma(x))$ with $x\in X(\sigma)$.
\end{theorem}
\begin{proof}
Let $k=|\sigma|$.
Let $y$ be in $X(\sigma)$. For every $n > 2k$, there is an integer
$m \geq 1$ such
that $y_{[-n,n]}$ is a factor of $\sigma^m(a)$ for some letter $a
\in A$.

Hence $y_{[-n, n]} = s \sigma(a_1 \ldots a_\ell) p$, where $a_i
\in A$, $a_1 \ldots a_\ell$ is a factor of $\sigma^{m-1}(a)$,
$s$ is a suffix of a word in $\sigma(A)$ and $p$ is a prefix of a word in
$\sigma(A)$. Since $n > 2k$, at leat one letter $a_i$ is such that
$\sigma(a_i) \neq \varepsilon$. We write $a_1 \ldots a_\ell = u_1
\ldots u_r$ where $r \geq 1$, $u_j = v_jb_j$ with $b_j \in A, v_j \in A^*$ (the word
$v_j$ being possibly the empty word), $\sigma(v_j)=\varepsilon$
and $\sigma(b_j) \neq \varepsilon$.

Then there are integers $n \leq k_1 < k_2 < \cdots
< k_{r+1} \leq  n+1$ such that
$y_{[-n, n]} = s \sigma(u_1) \ldots \sigma(u_r) p$,
with $s = y_{[-n, k_1)}$, $\sigma(u_j) = y_{[k_j,k_{j+1})}$, $p= y_{[k_{r+1},n]}$.
Note that $\sigma(u_j)=\sigma(a_j)$ is non-empty and belongs to
$\sigma(A)$.

For every $i$ with $-n + k \leq i \leq n-k$, we denote by $f_n(i)$
the unique triple $(k_j, k_{j+1}-1, u_j)$ defined above such that $k_j \leq i <
k_{j+1}$.

Since $\sigma(A)$ is finite and since there is a finite number of erasable
words in $\cL(\sigma)$ by Proposition~\ref{propositionErasing},
there is a finite number of  triples $f_n(0)$. Thus there is a
strictly increasing sequence $(i_n)_{n \in
\N}$ of natural integers such that $f_{i_n}(0)$ is constant.
We denote this constant value by $f(0)$.

Taking a 
subsequence of $(i_n)_{n \in \N}$ we may also assume that all $f_{i_n}(-1)$ and $f_{i_n}(1)$ are equal
to some triple $f(-1)$ and $f(1)$ respectively. By iterating his
process we define for each $i \in \Z$ a triple $f(i)=(j_i, \ell_i,
u_i)$.
By construction $f(i) = f(i')$ for all $i, i' \in [j_i, \ell_i)$.

We now concatenate the words $u_i$ in order to build a point $x$ as follows.
We define a strictly increasing sequence $(j_n)_{n \in \Z}$ of integers by
$j_0 = 0$, for $n > 0$, $j_n$ is the least integer larger than
$j_{n-1}$ such that $f(j_n) \neq f(j_{n-1})$, and for $n < 0$,
$j_n$ is the largest integer less than
$j_{n+1}$ such that $f(j_n) \neq f(j_{n+1})$.
Let $x= \ldots u_{j_{-2}}u_{j_{-1}}.u_{j_0}u_{j_1}u_{j_2}\ldots $.
By construction $x \in X(\sigma)$ and there is some integer $r$ such that $y = S^r(\sigma(x))$.
\end{proof}

We will also need the following more technical statements.
The first one concerns growing letters.
Let $\sigma\colon  A^* \rightarrow A^*$ be a morphism.
Recall that a letter
$a$ is
growing if $|\sigma^n(a)|$
is unbounded and that in this case, by Proposition~\ref{lemmaGrowing},
one has actually $\lim_{n \rightarrow +\infty} |\sigma(a)| = + \infty$.
\begin{proposition} \label{lemma.onesided}
Let $\sigma\colon  A^* \rightarrow A^*$ be a morphism and let $S$ be
the set of right-infinite sequences $x$ such that every $x_{[0, n]}$ is a prefix of
some $\sigma^m(a)$ for a fixed letter $a$.

If $x \in S$ contains a growing
letter, there are integers $r$
and $0 \leq k < r$ such that
$x = \lim_{n \rightarrow +\infty}\sigma^{rn+k}(a)$.

The set of points $x \in S$ with
no growing letters is a finite set of
eventually periodic right-infinite sequences.
\end{proposition}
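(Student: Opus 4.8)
\medskip

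The plan is to analyze the structure of sequences in $S$ by tracking how a finite prefix of $x$ sits inside iterates $\sigma^m(a)$, using the by-now-standard dichotomy between growing and non-growing letters. For the first statement, suppose $x\in S$ contains a growing letter, say $x_i$ is growing for some $i\ge 0$. First I would fix the letter $a$ such that every prefix $x[0,n]$ is a prefix of some $\sigma^m(a)$. Since the iterates $\sigma^m(a)$ have strictly increasing length once they contain a growing letter, and since $x[0,i]$ is a prefix of $\sigma^m(a)$ for all large $m$, the letter $a$ itself must be growing (otherwise all $\sigma^m(a)$ would have bounded length, contradicting that arbitrarily long prefixes of $x$ embed in them). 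As in the proof of Lemma~\ref{lemma.onesided}'s analogue inside Lemma~\ref{lemma.erasing}, for a growing letter $a$ there is an integer $p=p_a\le\Card(A)^2$ and a factorization $\sigma^{p}(a)=u\,b\,v$ with $b$ a growing letter occurring in $u$ (so that $a$ is "left-extending under $\sigma^p$"), OR symmetrically on the right; here we need the left case, i.e.\ that $a$ is the first letter of $\sigma^p(a)$ up to a growing prefix. More precisely: among the growing letters, consider the relation "$c$ occurs in $\sigma(c')$"; following a chain from $a$ we reach a growing letter $c$ which is the first letter of some $\sigma^{q}(c)$ for suitable $q\le\Card(A)$, and then $a$ has $c$ as first letter of $\sigma^{q'}(a)$ for some $q'$. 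Replacing $a$ by this $c$ (it generates the same $x$, up to a bounded shift that is absorbed since $x\in S$ is only required to have its prefixes inside iterates) we may assume $\sigma^{r}(a)$ starts with $a$ for some $r\ge1$, with $|\sigma^{r}(a)|>1$. Then $\sigma^{rn}(a)$ is a prefix of $\sigma^{r(n+1)}(a)$ for all $n$, the lengths tend to infinity, and the common limit $z=\lim_n\sigma^{rn}(a)$ is a well-defined right-infinite sequence. Since every prefix of $x$ lies in some $\sigma^m(a)$ and the $\sigma^{rn}(a)$ are cofinal (in the prefix order) among all $\sigma^m(a)$ that share long prefixes with $x$, one gets $x=z$; the offset $k$ with $0\le k<r$ accounts for choosing $m\equiv k\pmod r$ in the original description of $S$. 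The main obstacle here is bookkeeping: making rigorous that passing from $a$ to the letter $c$ that is a genuine fixed-point seed does not change the sequence $x$, and that the residue $k$ is the right one; this is where I expect to spend most of the care, by spelling out that $x[0,n]$ being a prefix of $\sigma^m(a)$ for infinitely many $m$ forces $x[0,n]$ to be a prefix of $\sigma^{rn'}(c)$ for infinitely many $n'$.

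\medskip

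For the second statement, suppose $x\in S$ has no growing letter, and again fix $a$ with every $x[0,n]$ a prefix of some $\sigma^m(a)$. Two cases: if $a$ is non-growing, then $|\sigma^m(a)|$ is bounded (non-growing letters have bounded iterate length — this is exactly the $M_a=\bigcup_n\sigma^n(a)$ finiteness used in Lemma~\ref{lemma.erasing}), so $x$ is in fact finite, contradiction unless one is careful; so in the relevant case $a$ is growing but $x$ uses no growing letter, which forces the growing letter in $\sigma^m(a)$ to be pushed off to the right faster than $x$ exposes it — more precisely, $x$ is then a prefix of $\lim \sigma^{rn}(a)$-type sequences but truncated before any growing letter. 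Rather than split hairs, the clean route is: the prefix $x[0,n]$ lies in $\sigma^m(a)$, and since $x[0,n]$ has only non-growing letters, it lies inside a maximal non-growing factor of $\sigma^m(a)$; the non-growing factors of iterates of $\sigma$ form a finite set of languages (again by the finiteness machinery of Lemma~\ref{lemma.erasing}, since a non-growing factor is a product of boundedly-long $M_b$-type blocks glued at non-growing positions), so the set of such $x$ is contained in the set of right-infinite words all of whose prefixes lie in a fixed finite union of languages of the form $w_0 w_1 w_2\cdots$ where each $w_i$ ranges over a fixed finite set. A König's-lemma / pigeonhole argument on this finite index set then shows any such infinite $x$ is eventually periodic: reading $x$ left to right we return to a repeated "state" (the residual data: which block-language we are in and the position within it), and between two returns the word repeats. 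Finiteness of the set of such $x$ follows because there are only finitely many states and finitely many choices at each step that keep all prefixes legal, so the set of legal right-infinite sequences is the set of infinite paths in a finite graph that are ultimately trapped in cycles — and there are finitely many such up to the eventual period, hence finitely many altogether once we note each is determined by a lasso (a path into a cycle) in a fixed finite graph. I would phrase this via the graph whose vertices are pairs (non-growing factor-type, offset) — close kin to the prefix graph $P$ appearing in the proof of Proposition~\ref{lemmaLeftMarked} — and invoke that a one-way-infinite path in a finite graph is eventually periodic, and that the number of distinct eventually-periodic sequences arising as such paths is finite because each is a lasso in that fixed finite graph.

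\medskip

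The hardest point, and the one I would treat most carefully, is establishing the finiteness in the last sentence of the proposition (not merely eventual periodicity of each individual $x$): a priori there could be infinitely many eventually periodic sequences. The resolution is that every such $x$ arises as an infinite path in one fixed finite labeled graph (the "non-growing prefix graph" of $\sigma$), and in a finite graph there are only finitely many infinite paths up to the choice of a lasso, hence the set of sequences they spell is finite. I would make the graph explicit: vertices are the pairs $(t,j)$ where $t$ is a non-growing factor occurring as $f_n$-type data (there are finitely many by Lemma~\ref{lemma.erasing}) and $j$ is a position inside it, with an edge $(t,j)\to(t',j')$ labeled by the corresponding next letter of $x$; then $x\in S$ with no growing letter corresponds exactly to an infinite path from a designated start, such a path is eventually periodic, and the set of labels of infinite paths in a finite graph that are eventually periodic is finite. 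This mirrors, and reuses, both the erasable-word finiteness of Lemma~\ref{lemma.erasing} and the cycle argument at the end of the proof of Proposition~\ref{lemmaLeftMarked}.
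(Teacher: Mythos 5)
Your overall strategy---pigeonhole on the iterates $\sigma^m(a)$ in the growing case, and the finiteness of non-growing material from Lemma~\ref{lemma.erasing} plus a state-repetition argument in the non-growing case---is in the right spirit, but both halves have gaps that are more than bookkeeping. For the first assertion, the step ``replace $a$ by a growing letter $c$ that is the first letter of some $\sigma^q(c)$'' is not valid. The conclusion concerns $\lim_n\sigma^{rn+k}(a)$ for the \emph{given} letter $a$, and $x$ may begin with a non-empty block of non-growing letters that is lost under the replacement. For $\sigma\colon a\mapsto bca$, $b\mapsto b$, $c\mapsto cc$ one has $\sigma^m(a)=bc^{2^{m-1}}bc^{2^{m-2}}\cdots bca$, so $x=bc^\omega$ lies in $S$ for the letter $a$, while the only candidate seed is $c$ and $\lim_n\sigma^{n}(c)=c^\omega\ne x$; no ``absorbed shift'' reconciles the two, since the statement asserts equality with the limit, not with a shift of it. Moreover the existence of a \emph{growing} letter $c$, reachable from $a$, that is the \emph{first} letter of $\sigma^q(c)$ is itself unestablished (in the example the letter that is first in its own image, namely $b$, is non-growing). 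What is actually needed is to work with the first growing letter $b=x_i$ of $x$ itself: two exponents $m<m'$ with $x[0,i]$ a prefix of both $\sigma^m(a)$ and $\sigma^{m'}(a)$ give $\sigma^{m'-m}(x[0,i-1]b)=x[0,i-1]bu$ with $u$ non-erasable, because the first growing letter of $\sigma^{m'}(a)$ sits at position $i$ and must lie inside $\sigma^{m'-m}(b)$, forcing $\sigma^{m'-m}(x[0,i-1])$ to be a prefix of $x[0,i-1]$; from this single identity the nested convergence of the $\sigma^{m+rn}(a)$ follows, non-growing prefix included.

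For the second assertion there are two problems. First, an infinite path in a finite graph need not be eventually periodic, and returning to a repeated state forces the word to repeat only if each state determines its successor; you do not show that your ``non-growing prefix graph'' is deterministic in this sense, and proving that is the real content of the statement. It comes from the telescoped form $\sigma^{r-1+ks}(a)=\sigma^{ks}(u)\,\sigma^{(k-1)s}(w)\cdots\sigma^{s}(w)\,w\,a_r z^{(k)}$ of the iterates together with pigeonhole on the finitely many values of $\sigma^{js}(u)$ and $\sigma^{js}(w)$: the sequence of non-growing blocks preceding the first growing letter is eventually periodic \emph{in the block index}, which is what forces $x$ to be a prefix of a word $z_1z^{k'}z_2$. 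Second, even granting that each such $x$ is eventually periodic, ``finitely many lassos in a fixed finite graph'' does not give finiteness of the set of sequences: lassos can have arbitrarily long tails, so a finite graph can spell infinitely many distinct eventually periodic words (two loops labelled $a$ and $b$ joined by an edge labelled $c$ already spells $a^ncb^\omega$ for every $n$). The finiteness asserted in the proposition requires explicit bounds, depending only on $\sigma$, on the lengths of the preperiod and period of $x$, and your sketch does not produce them.
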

\begin{proof}
  We first assume that $x$ contains a growing letter. Let $x_i= b$ be the
  first growing letter of $x$. Thus $x = x_{[0, i)}bx'$ where
  $x_{[0,i)}$ is non-growing. There are integers
  $m < m'$ such that $x_{[0,i)}b$ is a prefix of $\sigma^m(a)$ and
  of $\sigma^{m'}(a)$. Let $r = m'-m$.

  Since $x_{[0,i)}b$ is a prefix of $\sigma^m(a)$,
  $\sigma^{r}(x_{[0,i)}b)$ is a prefix of $\sigma^{m'}(a)$
  and thus $\sigma^{r}(x_{[0,i)}b)$ is prefix comparable
  with $x_{[0,i)}b$.
  Since $x_{[0,i)}$ is non-growing,
  $\sigma^{r}(x_{[0, i)})$ is a prefix $x_{[0, j)}$ of $x_{[0, i)}$
  and $\sigma^{r}(b) = x_{[j,i)}bu$ where $u$ is a non-erasable word
  (otherwise, $b$ would not be growing).

  Then for all integers $n$,
  $\sigma^{nr}(x_{[0, i)}b) = x_{[0,i)}bu\sigma^r(u) \cdots\sigma^{(n-1)r}(u)$.
      Thus $\lim_{n \rightarrow +\infty}  \sigma^{m+rn}(a)$ exists.
It follows that $\lim_{n \rightarrow +\infty}
  \sigma^{m+k+rn}(a)$ exists for every $0 \leq k < r$.
 It also follows that $x$ is one of the points $x^{(k,r)}= \lim_{n \rightarrow +\infty}
 \sigma^{k+rn}(a)$ for some $0 \leq k < r$.
 
Let us show that there is a finite number of such points. 
If $y$ is another point of $S$ with a growing letter, $y = \lim_{n \rightarrow +\infty}
\sigma^{k'+r'n}(a)$ with $r' > r$, $0 \leq k' < r'$, then we have
$y = x^{(k",r)}$ for some $0 \leq k" < r$.

We now assume that all letters of $x$ are non-growing.
There are nonnegative integers $r, p \leq \Card(A)$ such that $\sigma^r(a) = ubv$
and $\sigma^p(b) = wbz$ with $u, w$ non-growing, $b \in A$ growing.
We get that for any $k \geq 0$,
\begin{equation*}
  \sigma^{r + kp}(a) = \sigma^{kp}(u) \sigma^{(k-1)p}(w)\cdots \sigma^p(w)wbz^{(k)}.
\end{equation*}
Since $u$ and $w$ are non-growing there are nonnegative integers $i, i'$ such
that $\sigma^{i'p}(u) = \sigma^{i'p + ip}(u)$ and
$\sigma^{i'p}(w) = \sigma^{i'p + ip}(w)$.
It follows that
\begin{equation*}
  \sigma^{r + i'p + kip}(a) = \sigma^{i'p}(u) \sigma^{((i'+ki)-1)p}(w)\cdots \sigma^p(w)wbz^{(i'+ki)}.
\end{equation*}
Note that $w$ non-erasable since otherwise $x$ would have a growing
letter. Thus for large enough $k$ we get
\begin{equation*}
  \sigma^{r + i'p + kip}(a) = t z^{k-1}
\sigma^{(i'-1)p}(w)
  \cdots \sigma^p(w)wbz^{(i'+ki)}.
\end{equation*}
where $t = \sigma^{i'p}(u) \sigma^{(i' + i -1)p}(w) \cdots
\sigma^{i'p}(w)$, and $z = \sigma^{(i'+i -1)p}(w)   \cdots  \sigma^{i'p}(w)$.

Every $x_{[0, n]}$ is a prefix of some $\sigma^{N_n} (a)$.
There is an infinite number of $n$ such that $N_n$ is equal to $r +
i'p + k_nip + h$ for some fixed $0 \leq h < ip$ for some nonnegative integer $k_n$.
It follows that for an infinite number $n$, $x_{[0,n]}$ is a prefix of
$\sigma^h(tz^{k_n-1}t')$ where $t' = \sigma^{(i'-1)p}(w) \cdots \sigma^p(w)w$.
It follows that there is a finite number of words $u,v$
depending only on $\sigma$ such that $x = uv^\omega$.
Hence the conclusion.
\end{proof}


The second one gives a sufficient condition to have
$X(\sigma)=X(\sigma^n)$. This is not always true,
as shown by the example of $\sigma\colon a\mapsto bc,
b\mapsto cc,c\mapsto bb$. We have $^\omega c\cdot b^\omega$
is in $X(\sigma)$ but not in $X(\sigma^2)$.

\begin{lemma}\label{propositionX(sigman)=X(sigma)}
Let $\sigma\colon A^*\to A^*$ be a morphism. If every
letter $a\in A$ appears in $\sigma(A)$, then $X(\sigma^n)=X(\sigma)$
for every $n\ge 1$.
\end{lemma}
\begin{proof}
  It is enough to show that $\cL(\sigma^n)=\cL(\sigma)$.
  We first clearly have $\cL(\sigma^n)\subset \cL(\sigma)$.
  Conversely, let $u\in\cL(\sigma)$. Then $u$  is a factor of some
$\sigma^{m}(a)$. Let $k$ be such that $kn \leq m < (k+1)n$.
Since every letter appears in  $\sigma(A)$,
 the letter $a$ appears in $\sigma^{(k+1)n- m}(b)$ for
some letter $b$ and thus $u$ is a factor of $\sigma^{n(k+1)}(b)$.
Hence $u \in \cL(\sigma^n)$.
\end{proof}

A morphism $\sigma\colon A^*\to A^*$ is \emph{primitive} if there
is an integer $n\ge 1$ such that every $b\in A$ appears
in every $\sigma^n(a)$ for $a\in A$.

A morphism $\sigma\colon A^*\to A^*$ is \emph{aperiodic} if $X(\sigma)$
does not contain periodic points.

The following result is proved in \cite{BertheSteinerThuswaldnerYassawi2019}
in the case of non-erasing morphisms.
It is a generalization of Moss\'e's theorem asserting that
every aperiodic primitive morphism $\sigma$ is recognizable on
$X(\sigma)$. Our new proof holds for the general case of morphisms
with erasable letters
and relies on Theorem~\ref{theoremIndecomposable}. It also
gives a proof of the original version of Moss\'e's theorem which is easier than the previous ones
(see the comment after the proof).

\begin{theorem}\label{theoremBSTY}
Every morphism $\sigma\colon A^*\to A^*$ is recognizable on $X(\sigma)$ for aperiodic points.
\end{theorem}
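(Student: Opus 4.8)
The plan is to argue by induction on $\Card(A)$. If $\Card(A)=1$ then $X(\sigma)\subseteq\{a^\infty\}$ consists of periodic points and there is nothing to prove. If $\sigma$ is elementary, Theorem~\ref{theoremIndecomposable} says $\sigma$ is recognizable for aperiodic points, which immediately gives recognizability on $X(\sigma)$ for aperiodic points (restricting the admissible preimages to $X(\sigma)$ only discards $\sigma$-representations). So suppose $\Card(A)\ge 2$ and $\sigma$ is not elementary, and fix a decomposition $\sigma=\alpha\circ\beta$ with $\beta\colon A^*\to B^*$, $\alpha\colon B^*\to A^*$, every letter of $B$ occurring in some $\beta(a)$, and $\Card(B)$ as small as possible; then $\Card(B)<\Card(A)$, and minimality forces $\alpha$ to be elementary (a further decomposition of $\alpha$ would yield a smaller $B$). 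Put $\tau=\beta\circ\alpha\colon B^*\to B^*$; since $\Card(B)<\Card(A)$, the induction hypothesis applies to $\tau$, so $\tau$ is recognizable on $X(\tau)$ for aperiodic points. Note also that $\alpha$, being elementary, is non-erasing, injective on $B^\Z$ (Proposition~\ref{propositionInjectiveZ}), and recognizable for aperiodic points (Theorem~\ref{theoremIndecomposable}).

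The mechanism of the reduction rests on two facts. First, $\beta(X(\sigma))\subseteq X(\tau)$: from $\beta(\sigma(u))=\tau(\beta(u))$ one gets $\beta(\cL(\sigma))\subseteq\cL(\tau)$, hence all factors of $\beta(x)$ lie in $\cL(\tau)$ for $x\in X(\sigma)$; and when $x$ is aperiodic, Lemma~\ref{lemma.erasing} guarantees that $x$ has infinitely many non-erasable letters on each side, so $\beta(x)$ is a genuine point of $B^\Z$, hence of $X(\tau)$. Second, $\sigma^2=\alpha\circ\tau\circ\beta$, and by Proposition~\ref{propositionBKM} every point of $X(\sigma)$ is a $\sigma$-image (up to a bounded shift) of a point of $X(\sigma)$, which lets us feed representations one level deeper. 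Now assume, for contradiction, that some aperiodic $y\in A^\Z$ has two distinct $\sigma$-representations $(x_1,k_1),(x_2,k_2)$ with $x_1,x_2\in X(\sigma)$; then $x_1,x_2$ and $\sigma(x_1),\sigma(x_2)$ are aperiodic. Writing $x_i=S^{a_i}\sigma(v_i)$ with $v_i\in X(\sigma)$ (Proposition~\ref{propositionBKM}) and $q_i=\beta(v_i)\in X(\tau)$, we obtain $y=S^{c_i}\alpha\big(\tau(q_i)\big)$ for suitable integers $c_i$. Converting to genuine $\alpha$-representations, $y=S^{d_i}\alpha(r_i)$ with $r_i$ in the shift-orbit of $\tau(q_i)$; and $r_i$, being a shift of $\tau(q_i)$ with $q_i\in X(\tau)$, carries a $\tau$-representation $(p_i,m_i)$ with $p_i\in X(\tau)$. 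Since $y$ is aperiodic and $\alpha$ is recognizable for aperiodic points, $(r_1,d_1)=(r_2,d_2)$; in particular $r_1=r_2=:r$, an aperiodic point. By the induction hypothesis its two $\tau$-representations $(p_1,m_1),(p_2,m_2)$ with $p_i\in X(\tau)$ coincide, so $q_1$ and $q_2$, that is $\beta(v_1)$ and $\beta(v_2)$, lie in the same shift-orbit. All the representation-bookkeeping here is exactly that of the proof of Proposition~\ref{propositionRecClosed}.

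What remains is to deduce $(x_1,k_1)=(x_2,k_2)$ from $\beta(v_1)\sim\beta(v_2)$, and \emph{this is the crux of the argument}. If the shift carrying $\beta(v_1)$ to $\beta(v_2)$ happens to fall on a $\beta$-block boundary, then $\beta(v_1)=\beta(S^jv_2)$ for some $j$, whence $\sigma(v_1)=\sigma(S^jv_2)$ and so $x_1,x_2$ lie in one shift-orbit; the constraints $0\le k_i<|\sigma((x_i)_0)|$ then force, by a short length computation, that the shift between $x_1$ and $x_2$ is $0$ and that $k_1=k_2$, contradicting distinctness. In general the shift need not be a $\beta$-block boundary, and one must iterate: apply Proposition~\ref{propositionBKM} again to $v_1,v_2$, push the relation $\beta(v_1)\sim\beta(v_2)$ through $\beta$ and $\tau$ exactly as above, and repeat --- the classical Moss\'e desubstitution. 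The point where care is genuinely needed, and the main obstacle, is to see that this descent terminates: the growing letters make the relevant block lengths tend to infinity, so an excess shift cannot survive indefinitely, while the letters that are \emph{not} growing contribute, by Proposition~\ref{lemma.onesided}, only eventually periodic one-sided tails carrying a bounded amount of data (and Lemma~\ref{lemma.erasing} bounds the erasable blocks), so the descent cannot stall on those either; combined with the aperiodicity of $y$, this yields the contradiction. Equivalently, one may fix at the outset a counterexample $(x_1,k_1),(x_2,k_2)$ minimizing a suitable combinatorial measure of the discrepancy and check that the reduction above produces a strictly smaller one, which is absurd.
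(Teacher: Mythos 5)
Your reduction skeleton (decompose $\sigma=\alpha\circ\beta$ with $\Card(B)$ minimal, set $\tau=\beta\circ\alpha$, use Theorem~\ref{theoremIndecomposable} for $\alpha$ and an inductive hypothesis for $\tau$, push representations through as in Proposition~\ref{propositionRecClosed}) is close in spirit to the paper's, but the proof has a genuine gap exactly where you flag ``the crux.'' Having obtained that $\beta(v_1)$ and $\beta(v_2)$ lie in one shift-orbit, you try to descend to $v_1\sim v_2$, observe correctly that this needs the shift to fall on a $\beta$-block boundary, and then invoke ``the classical Moss\'e desubstitution'' with an unproven termination claim (``the growing letters make the relevant block lengths tend to infinity, so an excess shift cannot survive indefinitely''). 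That termination argument \emph{is} the hard core of Moss\'e's theorem; asserting it is not proving it, and the whole point of the present approach is to avoid it. The irony is that your crux dissolves in one line if you go up instead of down: from $\beta(v_1)=S^j\beta(v_2)$, apply $\alpha$ to get $\sigma(v_1)=\alpha(\beta(v_1))=S^{j'}\alpha(\beta(v_2))=S^{j'}\sigma(v_2)$, hence $x_1\sim x_2$; then $y=S^{k_1}\sigma(x_1)=S^{k_2}\sigma(x_2)$ together with $x_2=S^m x_1$ and the normalization $0\le k_i<|\sigma((x_i)_0)|$ forces $m=0$ and $k_1=k_2$, since otherwise $\sigma(x_1)$, and hence $y$, would be periodic. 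This is precisely the step ``applying $\alpha$ again, we conclude that $y,y'$ belong to the same orbit'' in the paper. You never need $v_1\sim v_2$, only $\sigma(v_1)\sim\sigma(v_2)$, and the latter does not care about block boundaries.

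Two further points. First, your claim $\beta(X(\sigma))\subseteq X(\tau)$ is not justified by ``$\beta(\sigma(u))=\tau(\beta(u))$ gives $\beta(\cL(\sigma))\subseteq\cL(\tau)$'': the image $\beta(w)$ of a factor $w$ of $\sigma^n(a)$ is a factor of $\tau^n(\beta(a))$, and to place it in $\cL(\tau)$ one needs $\beta(a)$ itself to occur in some $\tau^m(c)$, which holds when $a$ occurs in some $\sigma(a')=\alpha(\beta(a'))$ but can fail for letters of $A$ that never appear in $\sigma(A)$. The paper isolates this as Case~1 (every letter occurs in $\sigma(A)$, where moreover $X(\sigma)=X(\sigma^n)$) and handles the remaining letters by a separate induction (Case~2) relying on Proposition~\ref{lemma.onesided}; your proof has no analogue of that case, so as written it only covers the situation of Case~1. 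Second, note the structural difference from the paper: the paper decomposes a \emph{power} $\sigma^n$ with $\Card(B)$ minimal over all powers, which forces $\tau=\beta\circ\alpha$ itself to be elementary and hence recognizable for aperiodic points on the full shift $B^\Z$; you decompose $\sigma$ itself and get only recognizability of $\tau$ on $X(\tau)$ from the induction on $\Card(A)$. Your weaker hypothesis can still be made to work once the two issues above are repaired, but it is worth seeing why the paper's power trick buys a cleaner closure.
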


  \begin{proof}
    Let $\sigma\colon A^*\to A^*$ be a morphism.
    
    \noindent{\bf Case 1.}
We first assume that every letter of $A$ is a letter of some word of
$\sigma(A)$. By Lemma~\ref{propositionX(sigman)=X(sigma)}
we have $X(\sigma) = X(\sigma^n)$ for every positive
integer $n$. 

It is enough
  to prove the statement for a power $\sigma^n$ of $\sigma$.
  Indeed, assume that $\sigma^n$ is recognizable on $X(\sigma^n)=X(\sigma)$
  for aperiodic points. Let $x$ be an aperiodic point $x\in X(\sigma)$
  and let $(z,K)$ be its unique centered $\sigma^n$-representation.
   Let $\sigma^{n-1}(z_0)=uav$ with $a\in A$ be the
  unique factorization of $\sigma^{n-1}(z_0)$ such that
  $|\sigma(u)|\le K<|\sigma(ua)|$. Set $y=S^{|u|}\sigma^{n-1}(z)$ and $\sigma(a)=sx_0t$.
  Then $(y,|s|)$ is the unique centered
  $\sigma$-representation  of $x$.
  
  Indeed, it is a centered $\sigma$-representation of $x$.
  If $(y',k')$ is another centered $\sigma$-representation of $x$,
  let $(z',r')$ be a centered $\sigma^{n-1}$-representation of $y$.
  Set $\sigma^{n-1}(z'_0)=u'a'v'$ with $|u'|=r'$ and
  $\sigma(a')=s'x_0t'$ with $|s'|=k'$.
  Then $u'a'$ is a prefix of $\sigma^{n-1}(z'_0)$
  such that $|\sigma(u')|\le |\sigma(u')|+k' <|\sigma(u'a')|$.
  Since $(z',|\sigma(u')|+k')$ is a centered $\sigma^n$-representation
  of $x$, we have $z = z'$, $K=|\sigma(u')|+k'$ and thus $ua=u'a'$,
  which implies $y=y'$ and $k=k'$.

  Choose $n$  such that  $\sigma^n$
  has a decomposition $\sigma^n=\alpha\circ\beta$
  with $\beta\colon A^*\to B^*$ and $\alpha\colon B^*\to A^*$
  and $\Card(B)$ minimal.
  Then  $\tau=\beta\circ\alpha$ is elementary. Indeed,
  if $\tau=\gamma\circ\delta$ with $\delta:B^*\to C^*$
  and $\gamma\colon C^*\to B^*$ is a decomposition of $\tau$
  such that $\Card(C)<\Card(B)$, then $\sigma^{2n}=(\alpha\circ\gamma)\circ(\delta\circ\beta)$ is a decomposition of $\sigma^{2n}$ with
  $\delta\circ\beta\colon A^*\to C^*$ and $\alpha\circ\gamma\colon C^*\to A^*$
  such that $\Card(C)<\Card(B)$, a contradiction.

  It is easy
  to check that $\alpha$  is also elementary.
Moreover, we have 
  \begin{equation}
    \beta(X(\sigma))\subset X(\tau).
    \label{eqSigmaTau}
  \end{equation}
 
    \begin{figure}[hbt]
    \centering
\tikzset{node/.style={draw,minimum size=0.4cm,inner sep=0pt}}
\tikzset{node1/.style={circle,draw,minimum size=0.1cm,inner sep=0pt}}

\begin{tikzpicture}
          \node[node]at(2,4){$b$};
          \node[node]at(2,3){$a$};
          \node[node1](w1)at(1,.5){};
          \node[node1](w2)at(3,.5){};
          \draw[->,left,>=stealth](1.8,3.8)--node{$\sigma^n$}(1,2.8);
          \draw[->,>=stealth](2.2,3.8)--node{}(3,2.8);
          \draw(1,2.8)--(1.8,2.8);\draw(2.2,2.8)--(3,2.8);
          \draw[->,left,>=stealth](1.8,2.8)--node{$\sigma^{Nn}$}(.5,1.5);
          \draw[->,>=stealth](2.2,2.8)--node{}(3.5,1.5);
          \draw(.5,1.5)--(3.5,1.5);
          \draw[->,left,>=stealth](.5,1.5)--node{$\beta$}(0,1);
          \draw[->,>=stealth](3.5,1.5)--node{}(4,1);
          \draw(0,1)--(4,1);
          \draw[below](w1)--node{$w$}(w2);
          \node[node]at(7,4){$b$};
          \node[node]at(7,3.5){$c$};
          \node[node1](w3)at(6,.5){};
          \node[node1](w4)at(8,.5){};
          
          \draw[->,left,>=stealth](6.8,3.8)--node{$\beta$}(6.5,3.3);
          \draw[->,>=stealth](7.2,3.8)--(7.5,3.3);
          \draw(6.5,3.3)--(7.5,3.3);
          \draw[->,left,>=stealth](6.8,3.3)--node{$\tau^{(N+1)n}$}(5,1);
          \draw(5,1)--(9,1);
          \draw[->,>=stealth](7.2,3.3)--node{}(9,1);
          \draw[below](w3)--node{$w$}(w4);
        \end{tikzpicture}
        \caption{proving that $w$ is a factor of $\tau^m(c)$.}
        \label{figureBeta(X)}
        \end{figure}
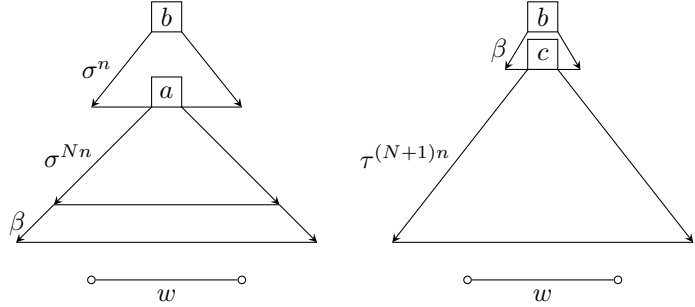
   Indeed, consider $x\in X(\sigma)$ and let $w$ be a factor
  of $\beta(x)$. We have to prove that $w$ is a factor
  of some $\tau^m(c)$ for $c\in B$. Now, since $x \in X(\sigma^n)$, there in an $N\ge 1$
  such that $w$ is a factor of some $\beta\circ\sigma^{Nn}(a)$
  for $a\in A$ and $N\ge 1$ and also a factor of
  $\beta\circ\sigma^{(N+1)n}(b)$ with $a$ factor of $\sigma^n(b)$
  (see Figure~\ref{figureBeta(X)}). Since $\sigma^n=\alpha\circ\beta$,
  there is a letter $c$ such that $a\in\alpha(c)$ and $c \in \beta(b)$.
  Since $\beta\circ\sigma^n=\tau^n\circ\beta$,
  we obtain that $w$ is a factor of $\tau^{(N+1)n}(c)$.
  
    Let $x\in X(\sigma)$ be an aperiodic point. Consider two centered
    $\sigma^n$-representa\-tions $(y,k)$ and $(y',k')$
    of $x$ with $y,y'\in X(\sigma)$. 
    Let $(z, \ell)$ and $(z', \ell')$
    be centered $\sigma^n$-representations  of $y$ and $y'$ respectively
    with $z,z'\in X(\sigma)$. Then $(\beta(z),m)$ and $(\beta(z'),m')$
    are,
    for some unique $m,m'$, some centered $\tau$-representations of $\beta(y)$
    and $\beta(y')$ respectively (see Figure~\ref{figurexyz}).
    The fact that $m,m'$ are unique results from the
    fact that $\tau$ is elementary and thus recognizable for aperiodic
    points by Theorem~\ref{theoremIndecomposable}.
    
    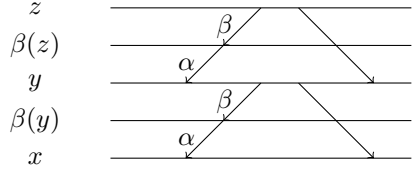
\begin{figure}[hbt]
    \centering
\tikzset{node/.style={draw,minimum size=0.4cm,inner sep=0pt}}
\tikzset{title/.style={minimum size=0.5cm,inner sep=0pt}}

\begin{tikzpicture}
  \node[title]at(0,2){$z$};\draw(1,2)--(5,2);
  \node[title]at(0,1.5){$\beta(z)$};\draw(1,1.5)--(5,1.5);
  \node[title]at(0,1){$y$};\draw(1,1)--(5,1);
  \draw[->,left](3,2)--node{$\beta$}(2.5,1.5);
  \draw[->,left](2.5,1.5)--node{$\alpha$}(2,1);
  \draw[->,left](3.5,2)--node{}(4.5,1);
  \node[title]at(0,.5){$\beta(y)$};\draw(1,.5)--(5,.5);
  \node[title]at(0,0){$x$};\draw(1,0)--(5,0);
  \draw[->,left](3,1)--node{$\beta$}(2.5,.5);
  \draw[->,left](2.5,.5)--node{$\alpha$}(2,0);
  \draw[->,left](3.5,1)--node{}(4.5,0);
\end{tikzpicture}
\caption{The points $x,y,z$.}\label{figurexyz}
\end{figure}
    Since $\alpha$ is elementary,
    it is recognizable for aperiodic points.
    Since $\alpha(\beta(y))=\sigma^n(y)$
    and $\alpha(\beta(y'))=\sigma^n(y')$, this implies that
    $\beta(y),\beta(y')$ belong to the same orbit.
    Since $\tau$ is recognizable for aperiodic points,
    we obtain that $\beta(z)$ and $\beta(z')$ belong to the same
    orbit. Applying $\alpha$ again, we conclude that
    $y,y'$ belong to the same orbit.
Since $x$ is aperiodic, this implies that $y = y'$ and $k = k'$. Thus $\sigma^n$
    is recognizable for aperiodic points.

 \noindent{\bf Case 2.}      We now prove the general case. We prove the result by induction on
   the number   of letters which do not appear in some word of
   $\sigma(A)$.
   The case where there is no such letter is Case 1 and thus the property holds. Let $a$ be a letter of $A$ which does
   not appear in a word in $\sigma(A)$ and set $B= A \setminus \{a\}$. We denote
   by $\sigma_B$ the morphism $\sigma$ restricted to $B^*$.
   By induction hypothesis,  $\sigma_B$ is recognizable on $X(\sigma_B)$
   for aperiodic points. 
   We denote by $K$ the maximal length of all $\sigma^n(b)$ for
   non-growing letters $b$ in $B$.

  Let $y$ be an aperiodic point having two centered $\sigma$-representations
  $(x, k)$ and $(x', k')$ with $x, x' \in X(\sigma)$. We
  cannot have $x,x'\in X(\sigma_B)$ and we may assume that
  $x$ belongs to $X(\sigma) \setminus X(\sigma_B)$. Thus there is an
  integer $m$ such that $x_{[-m, m]}$ is not a factor of any $\sigma^n(b)$ for
  $b \in B$. For each integer $n \geq m$, there is an
  integer $N(n)$ such that $x_{[-n,n]}$ is a factor of 
  $\sigma^{N(n)}(a)$.
  
  We have $\sigma(a) = b_1 \ldots b_\ell$ with $b_i \in B$.
  Since $x_{[-m, m]}$ is not a factor of any $\sigma^n(b_i)$, there are
  integers $1 \leq i_n \leq \ell$, $-m \leq k_n <  m$ such that $x_{[-n, k_n)}$
  is a suffix de $\sigma^{N(n)-1}(b_1 \ldots b_{i_n})$ and $x_{[k_n,
  n]}$ is a prefix  of $\sigma^{N(n)-1}(b_{i_n+1} \ldots b_\ell)$.
  Let $b_{j_n}$ be the last growing letter of $b_1 \ldots b_{i_n}$ and
  $b_{r_n}$ be the first growing letter of $b_{i_n+1} \ldots b_\ell$.
  Note that such growing letters exist for large enough $n$.

  There is an infinite number of integers $n$ such that $i_n = i$,
  $j_n = j$, $r_n = r$, $k_n = M$, where $1 \leq j \leq i < r \leq  \ell$
  and $-m \leq M < m$.

  Since $b_{j+1} \ldots b_i$ and $b_{i+1} \ldots b_{r-1}$ are
  non-growing, the lengths of all words $\sigma^{N(n)-1}(b_{j+1} \ldots
  b_i)$ and $\sigma^{N(n)-1}(b_{i+1} \ldots b_{r-1})$ are bounded by
  $K \ell$. Thus there is an infinite number of integers $n$ such that
  moreover $|\sigma^{N(n)-1}(b_{j+1} \ldots b_i)| = p$,
  $|\sigma^{N(n)-1}(b_{i+1} \ldots b_{r-1})| = p'$
  for some fixed integers $0 \leq p, p' \leq K\ell$.

  Let $b = b_j$ and $c = b_r$. Since $b, c$ are growing, for each $n$
  there are  integers $m, m'$
  such that $x_{[-n, M-p)}$ is a suffix of $\sigma^m(b)$ and $x_{[M+p',n]}$
  is a prefix of $\sigma^{m'}(c)$.

 By Proposition \ref{lemma.onesided}, there are finite sets $L, R$
of  respectively left and right-infinite sequences, and a
finite set $U$ of words of
length at most $2K\ell$
such that $x$ is a shift of $tut'$ with $t \in L$, $u \in U$ and $t'
\in R$. Thus there is a finite number of such orbits.

 We define a sequence $(x^{(i)})_{i \geq 0}$ of two-sided infinite sequences $x^{(i)}$
  by $x^{(0)} = x$
  and $x^{(i)} \in X(\sigma) \setminus X(\sigma_B)$ such that
  $x^{(i-1)} =S^k\sigma(x^{(i)})$ for
some $k$. There is an infinite number of $i$ such that $x^{(i)}$ are
shifts of the same sequence. Thus there are integers $i, j$ with $j \geq
1$ such that
$\sigma^j(x^{(i)})$ is a shift of  $x^{(i)}$.  As a consequence $y \in
X(\sigma) \setminus X(\sigma_B)$. Thus $x, x' \in  X(\sigma) \setminus
X(\sigma_B)$.

As above, we define sequences $(x^{(i)})$ and $({x'}^{(i)})$ for $x$ and $x'$. Thus
there are integers $i, j$ with $j \geq 1$ such that
$\sigma^j(x^{(i)})$ is a shift of  $x^{(i)}$ and 
$\sigma^{j}({x'}^{(i)})$ is a shift of ${x'}^{(i)}$. We get that there is
an integer $i$ such that $x^{(i)}$ and ${x'}^{(i)}$ are in the
same orbit and thus $x$ and $x'$ also.

Now if $x \neq x'$ of $k \neq k'$ with $0 \leq
k < |\sigma(x_0)|$ and $0 \leq k' < |\sigma(x'_0)|$,  we
get that $y$ is periodic, which is excluded.
Thus $y$ has a unique centered $\sigma$-representation
in $X(\sigma)$. 
\end{proof}
  Note that, to prove  Moss\'e's Theorem itself, one only needs the
  first case in the above proof. Indeed, if $\sigma$ is primitive,
  every letter appears in $\sigma(A)$. Note also that in this case
  Proposition \ref{lemma.onesided} is not used.
  The previous proofs of Moss\'e's Theorem work on a fixed point
  of the morphism and handle indices of this sequence, resulting
  in fairly complicated arguments (see for example~\cite{Kurka2003}).
  The proof given in~\cite{DurandPerrin2021} avoids these
  indices but remains essentially the same as that of \cite{Kurka2003}.
  The use of the notion of elementary morphism,
  through Theorem~\ref{theoremIndecomposable}, seems to us
  a major simplification.
  
We give below an example of a morphism $\sigma$ such that
$X(\sigma)$ contains both periodic and non-periodic points.
\begin{example}
  Let $\sigma$ be the morphism
  $\sigma\colon a\mapsto bac,b\mapsto bb,c\mapsto cd,d\mapsto c$. The
  set $X(\sigma)$ contains the aperiodic point $^\omega b\cdot a\sigma^\omega(c)$
  where $\sigma^\omega(c)=cdccd\cdots$ is the one-sided Fibonacci sequence.
  It also contains the periodic point $y=b^\infty$.
  Every point, except $y$, has a unique centered $\sigma$-representation.
  \end{example}
\section{Automata and syntactic groups} \label{sectionSyntactic}
We now introduce notions from automata theory which will
allow us to formulate a characterization of
morphisms $\sigma\colon A^*\to B^*$ which are
recognizable for aperiodic points (Theorem~\ref{theoremSyntactic}).

Let $\A=(Q,I,T)$ be a finite automaton on the alphabet $A$
with $Q$ as set of states,
$I$ as set of initial
states and $T$ as set of terminal states. Such
an automaton is just a graph with $Q$ as set of vertices
and edges labeled by $A$.
The language \emph{recognized} by $\A$ is the set of labels
of paths from $I$ to $T$. The automaton is \emph{deterministic}
if for every $q\in Q$ and $a$ in $A$, there is at most
one edge starting at $q$ labeled $a$ (the term deterministic
corresponds to the term \emph{right-resolving} used
in particular in~\cite{LindMarcus2021}).


An automaton is \emph{unambiguous} for every $p,q\in Q$
and $w\in A^*$, there is at most one path from $p$ to $q$
labeled by $w$. An automaton is \emph{unambiguous for aperiodic points}
if for every aperiodic two-sided infinite word $x$ there is at most one path of
the automaton labeled by $x$.

\begin{proposition}\label{propositionInjective}
  Let $\sigma\colon A^*\to B^*$ be a morphism. If $\sigma$
  is recognizable for aperiodic points, either 
   $\sigma$ is periodic or $\sigma$ is injective.
\end{proposition}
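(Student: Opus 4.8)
The plan is to argue by contraposition: suppose $\sigma$ is not injective; I want to show that either $\sigma$ is periodic or $\sigma$ is not recognizable for aperiodic points. The hypothesis that $\sigma$ is not injective splits into two cases according to whether $\sigma$ is injective on $A$ or not. If $\sigma$ is not injective on $A$, then $\sigma(a) = \sigma(b)$ for two distinct letters $a,b$, and then any aperiodic point $y$ of the form $\sigma(x)$ with $x$ containing $a$ admits a second $\sigma$-representation obtained by replacing that occurrence of $a$ by $b$ (more carefully: one builds $x' \ne x$ with $\sigma(x) = \sigma(x')$, which is easy since we have freedom to swap $a$'s for $b$'s); choosing $x$ so that $\sigma(x)$ is aperiodic — possible unless every such $\sigma(x)$ is periodic, which forces $\sigma$ periodic — contradicts recognizability for aperiodic points. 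So the substantive case is when $\sigma$ is injective on $A$ (equivalently $U = \sigma(A)$ is a set of $\Card(A)$ distinct words) but $\sigma$ is not injective on $A^*$, i.e. $U$ is not a code.

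In that case, $U$ not being a code gives a nontrivial relation $u_1 \cdots u_m = u'_1 \cdots u'_n$ with $u_i, u'_j \in U$ and $u_1 \ne u'_1$. The classical approach (as in the defect-theorem / circularity literature, and paralleling Lemma~\ref{lemmaLeftDelay} above) is to produce from such a relation a biinfinite word admitting two distinct $\sigma$-factorizations. Concretely, iterate the relation / use a pumping argument on the "overhang" prefixes exactly as in the proof of Lemma~\ref{lemmaLeftDelay} to obtain words $u,v,w$ with $u, uv, vw, wv \in U^*$ and $v \notin U^*$; then the biinfinite word
\[
  y \;=\; {}^{\infty}(uvw\cdots)\,\cdot\,\text{(something)}
\]
— more precisely one forms a biinfinite word from the periodic pattern on one side built from $v, w$ and checks it has two factorizations shifted by $|v|$ — has two distinct $\sigma$-representations. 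The only way this fails to contradict recognizability for aperiodic points is if every such $y$ we can build is periodic. One then shows that if every biinfinite concatenation of words of $U$ witnessing the non-code relation is periodic, then in fact all of $U^*$ lies in the powers of a single primitive word, i.e. $\sigma$ is periodic; here Proposition~\ref{propositionMeager} is the natural tool, since it characterizes periodicity of $\sigma$ via boundedness of minimal periods of words in $\sigma(A^*)$.

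The main obstacle I expect is the bookkeeping in the last step: extracting from "all witnesses to non-injectivity are periodic" the conclusion "$\sigma(A^*) \subseteq t^*$ for a single word $t$". One has to be careful that the two-sided word built from the relation, when forced to be periodic, pins down the periods of both $u$ and the $u'_j$, and via Fine–Wilf-type arguments (periods of overlapping words) propagate a common period to every element of $U$, hence — using that $U^*$ contains arbitrarily long words with that bounded period — to all of $\sigma(A^*)$. An alternative and perhaps cleaner route for this step is to invoke Proposition~\ref{propositionMeager}(ii)$\Leftrightarrow$(i) directly: show that if $\sigma$ is not periodic then $U$ contains two words that are powers of non-conjugate primitive words (or a word that is primitive up to the relevant overlap), and then the construction of the aperiodic double-$\sigma$-representation $y$ goes through, completing the contrapositive. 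I would organize the write-up so that the non-injective-on-$A$ case is dispatched in one paragraph, and the non-code case reduces, via Lemma~\ref{lemmaLeftDelay}-style extraction plus Proposition~\ref{propositionMeager}, to either exhibiting an aperiodic $y$ with two representations or concluding periodicity of $\sigma$.
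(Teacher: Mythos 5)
Your overall strategy (contrapositive, build an aperiodic point with two $\sigma$-representations, fall back on Proposition~\ref{propositionMeager} when periodicity threatens) is the right one, but as written the central step has a genuine gap, and the argument is far more complicated than it needs to be. In your ``non-code'' case, the words $u,uv,vw,wv\in U^*$ with $v\notin U^*$ naturally produce the point $(vw)^\infty$ (or $u(vw)^\omega$ preceded by some $U^*$-tail), and the two factorizations you get are shifts of each other along a \emph{periodic} pattern; so ``the only way this fails is if every such $y$ is periodic'' is not an exceptional case but the default outcome of the construction. The burden of the proof is therefore entirely on the step you yourself flag as the main obstacle --- propagating ``all witnesses are periodic'' to ``$\sigma(A^*)\subset t^*$'' via Fine--Wilf bookkeeping --- and that step is not carried out. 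A proposal whose main line terminates in an acknowledged unresolved obstacle is not yet a proof.

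The paper's proof shows that none of this machinery is needed: non-injectivity of $\sigma$ on $A^*$ \emph{directly} gives distinct words $u\ne v$ with $\sigma(u)=\sigma(v)=d$ (no case split on whether $\sigma$ is injective on letters, no defect-theorem extraction of $u,v,w$ as in Lemma~\ref{lemmaLeftDelay}), and non-periodicity gives, via the proof of Proposition~\ref{propositionMeager}, some $w$ with $c=\sigma(w)$ and $d$ not powers of a common primitive word. The single point ${}^{\omega}c\cdot d\,c^{\omega}$, with the two representations coming from ${}^{\omega}w\cdot u\,w^{\omega}$ and ${}^{\omega}w\cdot v\,w^{\omega}$, is then \emph{automatically} aperiodic: if it were periodic, a Fine--Wilf argument on the two tails ${}^{\omega}c$ and $c^{\omega}$ would force $c$ and $d$ into the same $r^*$. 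This is exactly the ``alternative and perhaps cleaner route'' you mention in your last sentences --- choosing the surrounding context $c$ up front so that aperiodicity is guaranteed --- but applied globally rather than only inside your second case. I would recommend discarding the case analysis and the Lemma~\ref{lemmaLeftDelay}-style extraction entirely and writing the argument that way; it is three lines, and it also covers the situation where the non-injectivity is witnessed by erasable words (where you should check that $d$ can be taken nonempty).
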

\begin{proof}
Assume that  $\sigma$ is not periodic.
  If $\sigma$ is not injective,  there are some distinct $u,v\in A^*$
  such that $\sigma(u)=\sigma(v)=d$. Since
  $\sigma$ is not periodic, there is some $w$
  such that $c=\sigma(w)$
  and $\sigma(u)$ are not powers of the same primitive word.
  Then ${^\omega}c\cdot dc^\omega$ is aperiodic and has more than
  one centered $\sigma$-representation. Thus $\sigma$ is not recognizable
  for aperiodic points.
  \end{proof}

For every finite
set $U\subset B^+$, there is a particular automaton 
which recognizes  $U^*$, called the \emph{flower automaton}
of $U$. 
Its 
set of states is the set $Q$  defined as
\begin{displaymath}
Q=\{(u,v)\in B^+\times B^+\mid uv\in U\}\cup \{\omega\},
\end{displaymath}
where $\omega$ is a new element.
There are four type of edges labeled by $a\in B$
\begin{eqnarray*}
(u,av)&\longedge{a}&(ua,v)\quad\text{for $uav\in U$, $u,v\ne1$}\\
\omega&\longedge{a}&(a,v)\quad\text{for $av\in U$, $v\ne 1$}\\
(u,a)&\longedge{a}&\omega\quad\text{for $ua\in U$, $u\ne 1$}\\
\omega&\longedge{a}&\omega\quad\text{for $a\in U$.}
\end{eqnarray*}
The state $\omega$ is both initial and terminal.
It is easy to verify that the flower automaton recognizes $U^*$.
Moreover, the set $U$ is a code if and only if its flower automaton
is unambiguous (see~\cite{BerstelPerrinReutenauer2009}).

Note that a morphism $\sigma\colon A^*\to B^*$
is left marked if and only if the flower automaton of $\sigma(A)$
is deterministic.

\begin{example}
  Let $\sigma\colon a\mapsto ab,b\mapsto a$ be the Fibonacci
  morphism. The flower automaton $\A$ of $\sigma(A) =
  \{ab, a\}$
  is represented in Figure~\ref{figureFlowerAutomaton}.
  \begin{figure}[hbt]
    \centering
\tikzset{node/.style={circle,draw,minimum size=0.6cm,inner sep=0pt}}
\tikzset{title/.style={minimum size=0.5cm,inner sep=0pt}}
\tikzstyle{every loop}=[->,shorten >=1pt,looseness=8]
        \tikzstyle{loop above}=[in=120,out=60,loop]
    \begin{tikzpicture}
      \node[node](1)at(0,0){$\omega$};\node[title](it)at(-1,0){};
      \node[node](2)at(2,0){$(a,b)$};
      \draw[bend left,above,->,>= stealth](1)edge node{$a$}(2);
      \draw[above,->,>= stealth](1)edge[loop]node{$a$}(1);
      \draw[bend left,above,->,>= stealth](2)edge node{$b$}(1);
      \end{tikzpicture}
\caption{The flower automaton of $\sigma(A) =
  \{ab, a\}$.}\label{figureFlowerAutomaton}
  \end{figure}
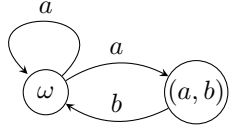
  
\end{example}

Let $\A$ be a finite automaton.
For $w\in A^*$, we denote by
$\varphi_\A(w)$  the  $Q\times Q$-relation defined by
\begin{displaymath}
\varphi_\A(w)_{p,q}=\begin{cases}1&\mbox{ if $p\edge{w}q$},\\0&\mbox{otherwise.}
\end{cases}
\end{displaymath}
The monoid $M(\A)=\varphi_\A(A^*)$ is the \emph{monoid of transitions} of the
automaton $\A$. For $m\in M(\A)$, we denote indifferently
$m_{p,q}=1$, $(p,q)\in m$  or $p\edge{m}q$ the fact
that $(p,q)$ are in relation by $m$.

When $\A$ is deterministic, each relation $\varphi_\A$
is a partial map from $Q$ to itself and thus $M(\A)$
is a submonoid of the monoid of partial maps from $Q$ to $Q$.

When $\A$ is unambiguous, the monoid $M=M(\A)$ is called
an \emph{unambiguous monoid of relations}. This
means that for every $m,n\in M$ and $p,q\in Q$
such that $p\edge{mn}q$ there is a unique $r\in Q$
such that $p\edge{m}r\edge{n}q$. This also means
that we can consider the elements of $M$ as $\{0,1\}$-matrices
with $0,1$ considered as integers.

As in any monoid, the Green relations $\GR$ and $\GL$
are defined by $m\GR n$ if $mM=nM$ and, symmetrically, $m\GL n$
if $Mm=Mn$. 
It is classical that $\GR$ and $\GL$ commute and thus that
the composition  $\GR\GL=\GL\GR$ is an equivalence, traditionally
denoted by $\GD$.

For an element $m$ of a monoid $M$, we denote by $H(m)$
the $\GH$-class of $m$, where $\GH$ is the Green relation
$\GH=\GR\cap\GL$. It is a group if and only if it
contains an idempotent $e$ (see~\cite{BerstelPerrinReutenauer2009}). In this case, every
$m\in H(e)$ has a unique inverse $m^{-1}$ in the
group $H(e)$.

Let $M$ be a unambiguous monoid of relations on $Q$.
A \emph{fixed point} of a relation $m\in M$  is
an element $q\in Q$ such that $q\longedge{m}q$.
A group in a monoid $M$ is a subsemigroup of $M$ which is a group.
Let $G$ be a group in $M$ with neutral element $e$. Let $S$ be the set
of fixed points of $e$. The restriction of the elements
of $G$ to $S\times S$ is a faithful representation
of $G$ by permutations on $S$. The number of elements
of $S$ is called the \emph{degree} of $G$.

The above representation of groups also exists in a monoid
of relations which is ambiguous but is more complicated (see
\cite{PerrinRyzhikov2021} or \cite{PerrinRyzhikov2020}).



A group in the monoid  $M(\A)$ is \emph{strongly cyclic}
(or \emph{special}) if $\varphi_\A^{-1}(G)$ is a cyclic submonoid of $A^*$.
This notion was introduced by Sch\"utzenberger in \cite{Schutzenberger1979}
(see also \cite{PerrinRindone2003}).
It implies that $G$ itself is a cyclic group. It also
implies that $\varphi^{-1}(D)$, where $D$ is
the $\GD$-class containing $G$, is also
included in the set of factors of the same cyclic submonoid.
In particular, if $e,f$ are idempotents in $D$, then
$\varphi_\A^{-1}(e)=u^+$ and $\varphi_\A^{-1}(f)=v^+$
where $u,v$ are conjugate. Moreover, if $m\in D$
is such that $m=em=mf$, then
\begin{equation}
  \varphi_\A^{-1}(e)=(rs)^+,\ \varphi_\A^{-1}(m)=(rs)^+r,\ \varphi_\A^{-1}(f)=(sr)^+
  \label{eqDclass}
  \end{equation}


A pair $(m,e)$ of elements in a monoid $M$ is a \emph{linked pair}
if $m=me$ and $e=e^2$.
The following result is \cite[Theorem 2.2]{PerrinPin2004}.
\begin{proposition}\label{propositionemf}
  Let $\varphi:A^*\to M$ be  a morphism from $A^*$ into a finite
  monoid $M$. For every $x\in A^\N$, there exists a linked pair $(m,e)$
  in $M$ such that $x\in\varphi^{-1}(m)\varphi^{-1}(e)^\omega$.
  \end{proposition}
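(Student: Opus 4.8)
The plan is to reduce the statement to the infinite Ramsey theorem for $2$-colourings of $\N$. Write $x = a_0a_1a_2\cdots$ with $a_i\in A$, and for $0\le i<j$ set $c(i,j)=\varphi(a_ia_{i+1}\cdots a_{j-1})\in M$. Since $M$ is finite, $c$ is a colouring of the two-element subsets of $\N$ by the finite palette $M$, so by Ramsey's theorem there is an infinite set $I=\{i_0<i_1<i_2<\cdots\}\subseteq\N$ and an element $e\in M$ such that $c(i_p,i_q)=e$ whenever $p<q$.

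I would then observe that $e$ is idempotent: applying the homomorphism property of $\varphi$ to $a_{i_0}\cdots a_{i_2-1}=(a_{i_0}\cdots a_{i_1-1})(a_{i_1}\cdots a_{i_2-1})$ gives $e=c(i_0,i_2)=c(i_0,i_1)\,c(i_1,i_2)=e^2$. Next I would set $m=\varphi(a_0a_1\cdots a_{i_1-1})$ and let $v=\varphi(a_0\cdots a_{i_0-1})$ (the empty product $1_M$ if $i_0=0$). Then $m=v\,c(i_0,i_1)=ve$, so $me=ve^2=ve=m$, and together with $e^2=e$ this shows that $(m,e)$ is a linked pair.

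Finally I would exhibit the factorisation $x=(a_0\cdots a_{i_1-1})(a_{i_1}\cdots a_{i_2-1})(a_{i_2}\cdots a_{i_3-1})\cdots$. Its first block is a word in $\varphi^{-1}(m)$, and for each $p\ge 1$ the block $a_{i_p}\cdots a_{i_{p+1}-1}$ is a nonempty word in $\varphi^{-1}(e)$, since $c(i_p,i_{p+1})=e$ and $i_p<i_{p+1}$. Hence $x\in\varphi^{-1}(m)\,\varphi^{-1}(e)^\omega$, as claimed. The only nontrivial ingredient is the appeal to the infinite Ramsey theorem that produces the monochromatic set $I$; once it is in hand, everything else is a one-line computation with the homomorphism property and the idempotency of $e$. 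One can instead proceed by an iterated pigeonhole argument, first on the prefix images $\varphi(x[0,k[)$ and then on the images of the resulting return segments, but this essentially reproves Ramsey's theorem for pairs, so the direct route above is preferable.
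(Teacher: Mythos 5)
Your proof is correct. The paper does not prove this statement itself but cites it as Theorem~2.2 of Perrin and Pin, and your argument --- colouring pairs $\{i,j\}$ by $\varphi(x[i,j[)$, extracting an infinite monochromatic set via Ramsey's theorem, checking $e=e^2$ and $me=m$, and reading off the factorisation --- is exactly the standard proof given there, with all the small verifications (nonemptiness of the blocks, the linked-pair identities) done properly.
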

A dual statement holds for a left infinite sequence. Consequently,
for every $x\in A^\Z$, there is a triple $(e,m,f)$ with
$e,f$ idempotents and $m=em=mf$ such that
$x\in{^\omega}\varphi^{-1}(e)\varphi^{-1}(m)\varphi^{-1}(f)^\omega$.

\begin{example}
  Let $\sigma$ be the Fibonacci morphism and let
  $\A$ be the flower automaton of $\sigma(A)=\{ab,a\}$
  (see Figure~\ref{figureFlowerAutomaton}).

  Set $\alpha=\varphi_\A(a)$ and $\beta=\varphi_\A(b)$.
  One has $a^2\equiv a$,
  $aba\equiv a$ and $b^2\equiv b^3$,
  which is a zero of the monoid.
  The monoid of transitions of $\A$,  is formed of $6$ elements
  pictured in Figure \ref{figureSMF}
  (we adopt the usual egg-box representation of monoids
  corresponding to the Green relations, see~\cite{BerstelPerrinReutenauer2009}).
  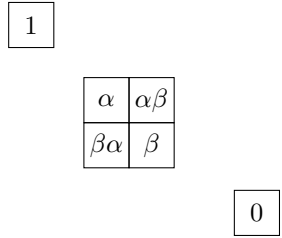
\begin{figure}[hbt]
    \centering
\tikzset{node/.style={draw,minimum size=0.6cm,inner sep=0pt}}
\tikzset{title/.style={minimum size=0.5cm,inner sep=0pt}}
    \begin{tikzpicture}
      \node[node](1)at(-1,2){$1$};
      \node[node](a)at(0,1){$\alpha$};
      \node[node](ab)at(0.6,1){$\alpha\beta$};
      \node[node](ba)at(0,.4){$\beta\alpha$};
      \node[node](ab)at(0.6,.4){$\beta$};
      \node[node](0)at(2,-.5){$0$};
      \end{tikzpicture}
\caption{The  monoid of transitions of $\A$.}\label{figureSMF}
  \end{figure}
  Besides $1$ and $0$, there are three groups in $M(\A)$
  reduced to one element ($\alpha$, $\alpha\beta$ and $\beta\alpha$), which have
  degree $1$.
\end{example}

\begin{theorem}\label{theoremSyntactic}
  Let $\sigma\colon A^*\to B^*$ be an injective morphism, and let  $\A$ be the flower automaton of
  $\sigma(A)$.
  The morphism $\sigma$ is recognizable for aperiodic points
  if and only if  $\A$ is unambiguous for aperiodic points.
\end{theorem}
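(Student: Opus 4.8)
The plan is to relate $\sigma$-representations of a biinfinite word $y\in B^\Z$ directly to bi-infinite paths in the flower automaton $\A$ of $U=\sigma(A)$. Recall that, since $\sigma$ is injective, $U$ is a code and $\A$ is unambiguous; the state $\omega$ marks the ``cuts'' between consecutive words of $U$. Given a $\sigma$-representation $(x,k)$ of $y$ with $0\le k<|\sigma(x_0)|$, the biinfinite word $y=S^k(\sigma(x))$ has a natural factorization into the words $\sigma(x_n)$, $n\in\Z$, and this factorization picks out, for each $n$, a position where the reading of $y$ passes through the state $\omega$; spelling $y$ letter by letter through $\A$ along these cuts yields a biinfinite path $\pi$ labeled $y$ that visits $\omega$ infinitely often in both directions, with the visits to $\omega$ recording exactly the decomposition $\cdots\sigma(x_{-1})\cdot\sigma(x_0)\cdots$ shifted by $k$. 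Conversely, any biinfinite path labeled $y$ in $\A$ must pass through $\omega$ infinitely often in each direction (a path that eventually stays inside $Q\setminus\{\omega\}$ would be trapped in a word of $U$, impossible as $U$ is finite and the path is biinfinite); the successive $\omega$-visits cut $y$ into words of $U=\sigma(A)$, and reading back through $\sigma^{-1}$ on $A$ (which is well-defined and injective on $A$) produces a sequence $x\in A^\Z$ and an offset $k$ with $0\le k<|\sigma(x_0)|$ such that $(x,k)$ is a $\sigma$-representation of $y$. The key point to check carefully here is that distinct $\sigma$-representations $(x,k)\ne(x',k')$ give rise to distinct biinfinite paths in $\A$ and vice versa: this is where injectivity of $\sigma$ on $A$ (so that $\sigma$ is bijective $A\to U$) and the constraint $0\le k<|\sigma(x_0)|$ are used, so that the pair $(x,k)$ is recovered unambiguously from the path and its set of $\omega$-visits.

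With this correspondence in hand, the theorem becomes almost a tautology. Suppose $\A$ is unambiguous for aperiodic points, and let $y\in B^\Z$ be aperiodic with two $\sigma$-representations $(x,k)$ and $(x',k')$. By the construction above these yield biinfinite paths $\pi,\pi'$ in $\A$ labeled by the (aperiodic) word $y$; by hypothesis $\pi=\pi'$, hence they have the same $\omega$-visits, hence $(x,k)=(x',k')$. So $\sigma$ is recognizable for aperiodic points. Conversely, suppose $\sigma$ is recognizable for aperiodic points, and let $x\in B^\Z$ be aperiodic with two biinfinite paths $\pi,\pi'$ labeled $x$. Each of $\pi,\pi'$ passes through $\omega$ infinitely often in both directions as noted, so each yields a $\sigma$-representation $(y,k)$, $(y',k')$ of $x$ with $y,y'\in A^\Z$; recognizability for aperiodic points forces $(y,k)=(y',k')$, and then the reconstruction of the path from $(y,k)$ shows $\pi=\pi'$. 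Hence $\A$ is unambiguous for aperiodic points.

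The one genuine subtlety — which I expect to be the main obstacle — is handling paths in $\A$ that visit $\omega$ only finitely often in one direction (or not at all). One must argue such a path cannot exist for a \emph{biinfinite} label: a forward-infinite path that never returns to $\omega$ stays forever within the ``interior'' states $(u,v)$ of a single word of $U$, but each such word has bounded length, so after finitely many steps the second coordinate $v$ is exhausted and the path is forced to $\omega$ (or dies); symmetrically for the backward direction. Thus every biinfinite path labeled by any $x\in B^\Z$ does pass through $\omega$ infinitely often in both directions, so the path-to-representation map is total, and the correspondence above is a genuine bijection between biinfinite paths labeled $y$ and $\sigma$-representations of $y$. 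Once this is pinned down, the equivalence ``$\A$ unambiguous for aperiodic points $\iff$ $\sigma$ recognizable for aperiodic points'' follows immediately, and no periodicity hypotheses or syntactic-monoid machinery are needed for this particular statement (those enter only in the subsequent results about deciding recognizability).
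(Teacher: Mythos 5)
Your proposal is correct and follows essentially the same route as the paper: both establish a bijection between $\sigma$-representations of $y$ and biinfinite paths of the flower automaton labeled $y$, using the fact that finiteness of $U$ forces every biinfinite path to visit $\omega$ infinitely often in both directions. Your write-up is just a more detailed version of the paper's argument.
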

\begin{proof}
 The morphism $\sigma$ is
recognizable for aperiodic points if and only if every aperiodic point
$x$ of $B^\Z$ has at most one decomposition in words of $\sigma(A)$, that is,
if there is at most one two-sided infinite path of $\A$ labeled by $x$ going
through the state $\omega$ for an infinite number of positive and
negative indices. As $\sigma(A)$ is finite, every two-sided infinite path of $\A$ goes
through the state $\omega$ for an infinite number of positive and
negative indices. Thus $\sigma$ is
recognizable for aperiodic points if and only $\A$ is unambiguous for aperiodic points.
\end{proof}

An automaton $\A$ is \emph{weakly deterministic} if there
is an integer $n$ such that for every state $p$ and
every word $w$ of length $n$, there is at most one right infinite path
starting at $p$ with label starting by $w$. Symmetrically,
$\A$ is \emph{weakly codeterministic} if there
is an integer $n$ such that for every state $p$ and
every word $w$ of length $n$, there is at most one left infinite path
ending at $p$ with label ending by $w$.

The notion of weakly deterministic automaton corresponds
to that of \emph{right closing} map in symbolic dynamics.
More precisely, the map assigning its label  to a right
infinite path in the automaton is right closing if
and only if the automaton is weakly deterministic.

The following statement gives a necessary condition for
an automaton to be unambiguous for aperiodic points.
An automaton is \emph{periodic} if the labels all infinite paths
are periodic. Otherwise, it is said to be \emph{non-periodic}.
Note that in a periodic automaton which is strongly connected
the labels of all finite paths are factors of $u^*$ for a single word $u$.
\begin{proposition}\label{propWeakly}
  If a strongly connected non-periodic
  finite automaton is unambiguous for aperiodic points,
  it is weakly deterministic and co-deterministic.
\end{proposition}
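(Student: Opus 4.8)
The plan is to argue by contraposition: assuming the automaton $\A$ is strongly connected, non-periodic, and \emph{not} weakly deterministic, I will produce an aperiodic biinfinite word with two distinct paths in $\A$, contradicting unambiguity for aperiodic points. (The co-deterministic case is symmetric, exchanging the roles of left-infinite and right-infinite extensions.) First I would unfold what failure of weak determinism means: for every integer $n$ there is a state $p_n$ and a word $w_n$ of length $n$ such that $p_n$ has two distinct right-infinite paths whose labels both begin with $w_n$. Since $\A$ is finite, the underlying transition monoid $M(\A)$ is finite; and since $\A$ is strongly connected and unambiguous (for aperiodic points), I would first want to pin down the structure of $M(\A)$ — in particular I expect to use Proposition~\ref{propositionemf} and the remarks around Equation~\eqref{eqDclass} to locate a suitable idempotent.

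The key mechanism is a pumping/branching argument. If $\A$ is not weakly deterministic then, because $M(\A)$ is finite, there are a state $q$, a word $u$, and two letters $a \ne b$ (or more precisely two distinct edges out of states reachable from $q$) together with words $s,t$ such that $q \edge{u} q$ and from $q$ two distinct continuations are possible that agree for arbitrarily long prefixes — standard finiteness of the monoid lets me arrange $q$ to lie on a cycle and the two branches to rejoin or to be individually extendable. Concretely, I would find an idempotent $e \in M(\A)$ with a fixed point $q$, i.e. $q \edge{e} q$ with $\varphi_\A^{-1}(e)$ a cyclic submonoid $r^+$ say, such that from $q$ (or from some state in the $\GD$-class of $e$) the word $r$ can be read in two genuinely different ways. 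Pumping $r$ then gives two distinct paths labeled by the same right-infinite word $r^\omega$, and concatenating on the left (using strong connectedness to return to $q$, and the non-periodicity to insert a factor that is not a power of $r$) yields an aperiodic biinfinite word carrying two distinct paths. The non-periodicity hypothesis is exactly what lets me avoid the situation of the Thue--Morse example, where the only ambiguous biinfinite words are periodic.

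The main obstacle I anticipate is the bookkeeping needed to guarantee the resulting biinfinite word is genuinely \emph{aperiodic} while still carrying two distinct paths: pumping a cycle produces periodic labels, so I must splice in a non-periodic chunk on one side, and I have to check that the two paths remain distinct after this splicing (the branch-point must survive). Here non-periodicity of $\A$ supplies a finite path whose label is not a factor of $r^*$, and strong connectedness lets me route from the branching configuration to that path and back; the care is in checking lengths so that the two paths differ at a definite coordinate. Once the aperiodic witness with two paths is constructed, Theorem~\ref{theoremSyntactic} (or directly the definition of unambiguity for aperiodic points) gives the contradiction, completing the contrapositive. A symmetric argument, reading paths from the right, establishes weak co-determinism.
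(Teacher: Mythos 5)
Your overall shape is right and matches the paper's: argue by contraposition, produce an aperiodic biinfinite word labelling two distinct paths, and use strong connectedness plus non-periodicity to attach an aperiodic left-infinite part. But the central step --- extracting, from the failure of weak determinism, a single state $p$ with \emph{two distinct right-infinite paths carrying the same infinite label} --- is where your argument has a genuine gap. The paper gets this directly by compactness/K\"onig: since the failure occurs for every $n$, one fixes (by pigeonhole) a state $p$ and a pair of distinct initial edges witnessing it for all $n$, and the nested nonempty closed sets of pairs of right-infinite paths from $p$ whose labels agree on the first $n$ letters have nonempty intersection. Your substitute --- locating an idempotent $e$ with $\varphi_\A^{-1}(e)=r^+$ and two ways of reading $r$ around a fixed point, then pumping --- does not follow from the hypotheses: $\varphi_\A^{-1}(e)$ being a cyclic submonoid is exactly the ``strongly cyclic'' property, which is condition (ii) of Theorem~\ref{theoremSyntactic2} and is not available in this preliminary proposition; and nothing in the failure of weak determinism forces the two branches to sit on a common cycle or to ``rejoin''. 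Invoking Proposition~\ref{propositionemf} and Equation~\eqref{eqDclass} here imports machinery that belongs to the proof of Theorem~\ref{theoremSyntactic2}, for which the present proposition is a stepping stone.

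A second, smaller gap is your aperiodicity splice. Inserting a finite factor that is merely ``not a power of $r$'' between a periodic left tail and $r^\omega$ does not guarantee aperiodicity: ${^\omega}(ab)\,a\,(ba)^\omega$ is periodic although $a$ is not a power of $ba$. The clean fix, which is what the paper does, is to make the left-infinite label itself aperiodic: non-periodicity of a strongly connected automaton yields a left-infinite path ending at $p$ whose label is not eventually periodic, so the concatenation is aperiodic no matter what the right half is, and the two biinfinite paths stay distinct because their right halves already differ. Finally, the contradiction comes straight from the definition of unambiguity for aperiodic points; Theorem~\ref{theoremSyntactic} concerns flower automata of morphisms and is not needed for this statement about general automata.
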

\begin{proof}
  Assume that $\A$ is not weakly deterministic. Then there
  is a state $p$ and a right-infinite sequence $x$
  such that there are two paths starting at $p$
  labeled $x$. Since $\A$ is strongly connected,
  there is a left-infinite path ending at $p$.
  Let $y$ be  the
  label of this path. Since $\A$ is non-periodic, we may
  choose $y$ aperiodic. Then $y\cdot x$ is
  an aperiodic point which is the label of more than
  one path in $\A$. The case where $\A$ is
  not weakly co-deterministic is symmetrical.
  \end{proof}

We now characterize as follows the unambiguous automata
which are unambiguous for aperiodic points. Combined with
Proposition~\ref{propositionInjective}
and Theorem~\ref{theoremSyntactic}, this will give us
a characterization of morphisms which are recognizable
for aperiodic points. Indeed, if $\sigma$ is
recognizable for aperiodic points,
by Proposition~\ref{propositionInjective} either it is
periodic or it is injective. In the first case,
it is trivially recognizable for aperiodic points.
In the second case, by Theorem~\ref{theoremSyntactic}, it is recognizable
for aperiodic points if and only if the flower automaton
of $\sigma(A)$ is unambiguous for aperiodic points.

\begin{theorem}\label{theoremSyntactic2}
  A strongly connected unambiguous finite automaton $\A$
  is unambiguous for aperiodic points if and only if the
  following conditions are satisfied.
  \begin{enumerate}
    \item[\rm(i)] $\A$ is weakly deterministic and co-deterministic.
    \item[\rm(ii)] For every idempotent $e\in\varphi_\A(A^+)$
      of rank at least $2$, the
    group $H(e)$ is strongly cyclic.
  \item[\rm(iii)] For every pair $(e,f)$ of idempotents in $\varphi_\A(A^+)$
    of rank at least $2$, and every pair of distinct fixed points $p,p'$
    and $q,q'$ of $e$ and $f$ respectively,
    if  some $m\in\varphi_\A(A^*)$ is such that
    $p\edge{m}q$ and $p'\edge{m}q'$, then
    $e,f$ and $m$ belong to the same $\GD$-class.
  \end{enumerate}
\end{theorem}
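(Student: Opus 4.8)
\textbf{Proof plan for Theorem~\ref{theoremSyntactic2}.}

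The plan is to treat the two directions separately, with the ``only if'' direction being largely a matter of assembling necessary conditions and the ``if'' direction requiring the real work. For the forward implication, condition (i) is exactly Proposition~\ref{propWeakly}. For (ii) and (iii), I would argue contrapositively: suppose $e$ is an idempotent of rank $\ge 2$ whose $\GH$-class $H(e)$ is not strongly cyclic, so $\varphi_\A^{-1}(H(e))$ is not contained in a cyclic submonoid $u^*$. Using Proposition~\ref{propositionemf} and its two-sided corollary, I would build an aperiodic biinfinite word of the form ${}^\omega v\, w\, v'^\omega$ lying in $\varphi_\A^{-1}(e)$-like sets, such that the rank-$\ge 2$ condition furnishes two distinct fixed points $p,p'$ of $e$, hence two distinct biinfinite paths on that word through $p$ and $p'$ respectively; the failure of strong cyclicity is what lets me choose the surrounding data so the resulting point is aperiodic rather than forced periodic. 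The argument for (iii) is parallel: if $m$ connects distinct fixed points of $e$ and of $f$ but $e,f,m$ are not $\GD$-equivalent, then iterating $e$ on the left and $f$ on the right and reading $m$ in the middle produces a periodic-looking skeleton, yet the $\GD$-inequivalence obstructs the coincidence that would be needed, again yielding an aperiodic point with two labelings.

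For the converse, assume (i)--(iii) and suppose for contradiction that some aperiodic biinfinite word $x\in A^\Z$ labels two distinct biinfinite paths $\pi,\pi'$ in $\A$. Applying the two-sided version of Proposition~\ref{propositionemf} to the morphism $\varphi_\A$, I get a factorization $x\in{}^\omega\varphi_\A^{-1}(e)\,\varphi_\A^{-1}(m)\,\varphi_\A^{-1}(f)^\omega$ with $e,f$ idempotents and $m=em=mf$. The paths $\pi,\pi'$ pass through some states at the ``cut points'' determined by this factorization; reading off the states where $\pi$ and $\pi'$ sit at the boundaries between consecutive $\varphi_\A^{-1}(e)$-blocks on the left and $\varphi_\A^{-1}(f)$-blocks on the right, and using that there are finitely many states so these stabilize, I obtain fixed points $p,p'$ of (a power of) $e$ and $q,q'$ of (a power of) $f$, connected by the middle element, and $p\ne p'$ (else by weak (co)determinism from (i) the two paths would coincide, forcing $x$ periodic). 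Condition (iii) then forces $e$, $f$, and the middle element into one $\GD$-class, so $e$ has rank $\ge 2$, and condition (ii) tells us $H(e)$ is strongly cyclic; via \eqref{eqDclass}, $\varphi_\A^{-1}(e)=(rs)^+$ and $\varphi_\A^{-1}(f)=(sr)^+$ with $r,s$ conjugating words, which pins the whole word $x$ down to $(rs)$-periodic on the left, $(sr)$-periodic on the right, glued compatibly by the middle — contradicting aperiodicity.

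I expect the main obstacle to be the bookkeeping in the converse around \emph{which} idempotent to use: the linked pair from Proposition~\ref{propositionemf} need not a priori have rank $\ge 2$, so I must first use the hypothesis that the two paths genuinely diverge (together with weak determinism/codeterminism, condition (i)) to locate a position where the two paths occupy two distinct states that are simultaneously fixed points of the relevant idempotent — only then do conditions (ii) and (iii) become applicable. Making this ``simultaneous distinct fixed point'' extraction rigorous, i.e.\ passing from the infinitely many cut points to stable pairs of states by a pigeonhole/compactness argument and checking the divergence survives, is the delicate step; once that is in place, the strong cyclicity of $H(e)$ via \eqref{eqDclass} collapses everything to periodicity almost mechanically. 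A secondary technical point is handling powers: $e$ as produced may need to be replaced by $e^{k}=e$, and $m$ by $e^i m f^j$, to line up block boundaries, but this does not change $\GD$-classes or ranks, so it is harmless.
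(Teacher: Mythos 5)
Your plan follows the same route as the paper's proof: the forward direction via Proposition~\ref{propWeakly} plus explicit constructions of aperiodic doubly-labeled words from the failure of (ii) or (iii), and the converse via the two-sided linked-pair factorization of Proposition~\ref{propositionemf}, using (i) to force the fixed points $p\ne p'$ and $q\ne q'$, then (iii) to place $e,m,f$ in one $\GD$-class and (ii) with \eqref{eqDclass} to conclude periodicity. The "simultaneous distinct fixed point" extraction you flag as delicate is indeed the step the paper passes over quickly, and your pigeonhole treatment of it is the right way to make it precise.
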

\begin{proof}
  Assume that $\A$ is unambiguous for aperiodic points.
  By Proposition~\ref{propWeakly}, condition (i) is satisfied.
  Let $e$ be an idempotent in $M(\A)$ of degree $d\ge 2$.
  If the group  $G=H(e)$ is not strongly cyclic,
   there are words of arbitrary large minimal period
  in $L=\varphi_\A^{-1}(G)$ and thus there is an aperiodic 
  sequence $x\in A^\Z$ such that all its factors are factors of words in $L$.
  Indeed, if $G$ is not strongly cyclic, there
  are words $u,v$ which are not powers of the same
  primitive word such that $e\in\varphi_\A(u)\cap\varphi_\A(v)$.
  Then $\{u,v\}^*$ contains words of arbitrary large minimal period
  and thus an infinite sequence $x$ with all its factors
  which are factors of words in $\{u,v\}^*$.
  Then $x$ is the label of $d$ paths in $\A$ and consequently,
  $\A$ is not recognizable for aperiodic points. This proves
  that condition (ii) holds. Assume finally that $m$
  is such that  $p\edge{m}q$ and $p'\edge{m}q'$
  for some fixed points $p,p'$ and $q,q'$ of
  two idempotents $e,f\in\varphi(A^+)$. If $e,f$ do not belong to the
  same $\GD$-class, we have $e\in\varphi_\A(u^+)$
  and $f\in\varphi_\A(v^+)$ for non-conjugate primitive words $u,v$
  (two idempotents are in the same $\GD$-class if and only if
  they are conjugate).
  Let $w$ be such that $\varphi(w)=m$. Then ${^\omega}u w v^\omega$
  is an aperiodic sequence which is the label of two distinct paths.

  Assume conversely that the conditions are satisfied. Consider
  a point $x\in A^\Z$ which is the label of more than one path.
  Since $M=M(\A)$ is a finite
  monoid, there is by Proposition \ref{propositionemf},
  some $m\in M$ and idempotents $e,f\in M$
  with $em=mf=m$ such that $x\in {^{\omega}EwF^\omega}$ with
  $E=\varphi_\A^{-1}(e)$, $\varphi_\A(w)=m$ and $F=\varphi_\A^{-1}(f)$.
  The two paths have
  the form $p\edge{e}p\edge{m}q\edge{f}q$
  and $p'\edge{e}p'\edge{m}q'\edge{f}q'$. If $p=p'$,
  then $\A$ is not weakly deterministic. Similarly,
  if $q=q'$, it is not weakly co-deterministic. Thus, by condition (iii),
  $e,m,f$ belong to the same $\GD$-class. By condition
  (ii), using Equation~\eqref{eqDclass}, this implies that $x$ is periodic.
  
\end{proof}

\begin{example}\label{exampleab,ba}
  Let $\sigma\colon a\mapsto ab,b\mapsto ba$ be the Thue-Morse morphism
  and let $\sigma(A)=\{ab,ba\}$. The flower automaton of $\sigma(A)$
  is represented in Figure~\ref{figureMinimalAutomatonTM}.
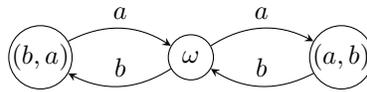
\begin{figure}[hbt]
    \centering
\tikzset{node/.style={circle,draw,minimum size=0.6cm,inner sep=0pt}}
\tikzset{title/.style={minimum size=0.5cm,inner sep=0pt}}
\tikzstyle{every loop}=[->,shorten >=1pt,looseness=8]
        \tikzstyle{loop above}=[in=120,out=60,loop]
    \begin{tikzpicture}
      \node[node](1)at(0,0){$(b,a)$};
      \node[node](2)at(2,0){$\omega$};\node[title](it)at(2,1){};
      \node[node](3)at(4,0){$(a,b)$};
      \draw[bend left,above,->,>= stealth](1)edge node{$a$}(2);
      \draw[bend left,above,->,>= stealth](2)edge node{$b$}(1);
      \draw[bend left,above,->,>= stealth](2)edge node{$a$}(3);
      \draw[bend left,above,->,>= stealth](3)edge node{$b$}(2);
      \end{tikzpicture}
\caption{The flower automaton of $\sigma(A)=\{ab,ba\}$.}\label{figureMinimalAutomatonTM}
\end{figure}
  The  monoid of transitions of $\A$ is
  represented in Figure~\ref{figureSMTM} with $\alpha=\sigma(a)$
  and $\beta=\sigma(b)$.
  \begin{figure}[hbt]
    \centering
\tikzset{node/.style={draw,minimum size=0.6cm,inner sep=0pt}}
\tikzset{title/.style={minimum size=0.5cm,inner sep=0pt}}
    \begin{tikzpicture}
      \node[node](1)at(-1,2){$1$};
      \node[node](a)at(0,1){$\alpha$};
      \node[title]at(0,1.5){$2,3$};
      \node[node](ab)at(0.6,1){$\alpha\beta$};
      \node[node](ba)at(0,.4){$\beta\alpha$};\node[node](ab)at(0.6,.4){$\beta$};
      \node[title]at(2,.5){$3$};
      \node[node](aa)at(2,0){$\alpha^2$};
      \node[node](a^2b)at(2.6,0){$\alpha^2\beta$};
      \node[node](a^2b^2)at(3.2,0){};
      \node[node](ba^2)at(2,-.6){$\beta\alpha^2$};
      \node[node](ba^2b)at(2.6,-.6){};
      \node[node](ba^2b^2)at(3.2,-.6){};
      \node[node](b^2a^2)at(2,-1.2){};
      \node[node](b^2a^2b)at(2.6,-1.2){};
      \node[node](b^2a^2b^2)at(3.2,-1.2){$\beta^2$};
      \node[node](0)at(4.5,-2){$0$};
      \end{tikzpicture}
\caption{The  monoid of transitions of $\A$.}\label{figureSMTM}
  \end{figure}
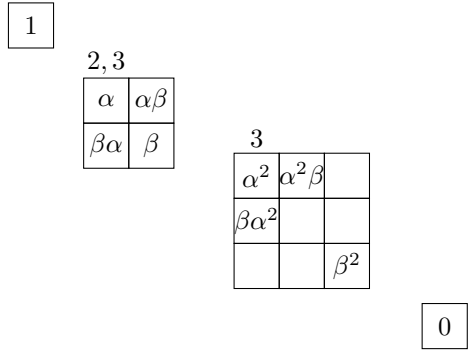
  There are two groups of degree $2$, each reduced to one element,
  namely $ab$ and $ba$. They are strongly cyclic since
  $\varphi_{\A}^{-1}(\alpha\beta)=(ab)^*$.
\end{example}

\section{Efficient algorithms} \label{section.efficient}
The conditions of Theorem \ref{theoremSyntactic2} give an effective
characterization of morphisms recognizable for aperiodic points.
We now consider the problem of finding a more efficient procedure
to check whether this property holds.

If $\A=(Q,I,T)$ is a finite automaton on $A$, the
\emph{square} $\A\times\A$ of the automaton $\A$ is
the following automaton. Its  set of states is $Q\times Q$ and
its edges are the $((p,q),a,(r,s))$ such that
$(p,a,r)$ and $(q,a,s)$ are edges of $\A$.

The \emph{diagonal} of $\A\times\A$ is the set of states of the
form $(p,p)$ with $p\in Q$. A strongly connected component of
an automaton is \emph{non-trivial} if it contains at least one cycle.

A \emph{simple path} in $\A$ is a path with all states distinct
except the extremities.
A \emph{simple cycle} around a state $p$ of $\A$ is a simple path from $p$ to
$p$.
A non-trivial strongly connected component of
an automaton is said to be reduced to a cycle if there is a simple cycle
$\pi_p$ around each state $p$ of the component such that any other cycling path
around $p$ is a power of $\pi_p$.

It is easy to verify that $\A$ is unambiguous if and only if there is no
path in $\A\times\A$ going from a diagonal state to a diagonal state
and going through a non-diagonal one.
This gives a polynomial algorithm to check whether an automaton
is unambiguous.

It is also easy to verify that if $\A$ is unambiguous,
its square is also unambiguous. Indeed, assume
that there are two paths in $\A \times \A$
\begin{align*}
  &(p,q)\edge{w}(u,v)\edge{w'}(r,s),\\
  &(p,q)\edge{w}(u',v')\edge{w'}(r,s),
\end{align*}
then there are paths 
$p\edge{w}u\edge{w'}r$,  $p\edge{w}u'\edge{w'}r$
and 
$q\edge{w}v\edge{w'}s$,  $q\edge{w}v'\edge{w'}s$ in $\A$
implying $u = u'$ and $v =v'$.

We have already seen in Proposition~\ref{propositionInjective}
that if $\sigma$ is a morphism
which is recognizable for aperiodic points,
then it is injective or periodic.

The size of an automaton is the sum of the number of states and the number
of edges of the automaton.
\begin{proposition} \label{proposition.unambiguous.aperiodic}
Let $\A$ be an unambiguous strongly connected automaton. Then $\A$ is
unambiguous for aperiodic points if and only if 
the following conditions are satisfied.
\begin{enumerate}
\item[\rm(i)] Every non-trivial strongly connected component of
  $\A\times \A$ out of the diagonal is reduced to a cycle.
\item[\rm(ii)] There is no path between two non-trivial
  strongly connected components of $\A\times \A$.
\end{enumerate}
All these conditions can be checked in a quadratic time in the size of $\A$.
\end{proposition}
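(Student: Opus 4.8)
The plan is to reformulate the two conditions of Theorem~\ref{theoremSyntactic2} (restricted to strongly connected $\A$) in terms of the square automaton $\A\times\A$, and then to observe that each of the resulting properties is computable in quadratic time using standard strongly-connected-component machinery. First I would show that, for a strongly connected unambiguous automaton $\A$, conditions (i)--(iii) of Theorem~\ref{theoremSyntactic2} together are equivalent to the two conditions stated here. The key translation is this: a group $H(e)$ in $M(\A)$ of degree $d\ge 2$ corresponds precisely to a set of $d$ coincident fixed points permuted by the elements of $H(e)$; running the two paths through these fixed points simultaneously is exactly a cycle (or more generally a strongly connected component) in $\A\times\A$ lying off the diagonal. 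The group $H(e)$ is strongly cyclic if and only if the off-diagonal portion carrying this group is \emph{reduced to a single cycle} --- because if $H(e)$ is not strongly cyclic there are two words $u,v$ not powers of a common primitive word with $e\in\varphi_\A(u)\cap\varphi_\A(v)$, which produces two distinct cycles through the same off-diagonal state, hence a non-trivial strongly connected component strictly larger than a cycle; conversely a strongly connected component off the diagonal that is not a single cycle yields, after pumping, an aperiodic bi-infinite label carried by two paths. This gives the equivalence of (ii) with condition~(i) here.

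Next I would handle condition (iii) of Theorem~\ref{theoremSyntactic2}. Condition (iii) fails exactly when there are idempotents $e,f$ of rank $\ge 2$ in distinct $\GD$-classes and an element $m$ of $M(\A)$ joining a pair of distinct fixed points of $e$ to a pair of distinct fixed points of $f$. In $\A\times\A$ this is exactly a path from an off-diagonal state lying in the (non-trivial) strongly connected component associated with $e$ to an off-diagonal state lying in the (non-trivial) strongly connected component associated with $f$, with these two components distinct --- equivalently, a path between two distinct non-trivial strongly connected components of $\A\times\A$ sitting off the diagonal. (Using that $\A$ is unambiguous, its square is unambiguous, so every off-diagonal non-trivial strongly connected component arises this way, from some idempotent of rank $\ge 2$; and distinctness of the components is the same as the idempotents being in different $\GD$-classes, since conjugate idempotents in a strongly cyclic group give the \emph{same} periodic label and hence, after condition~(i) has been imposed, the same cycle.) So (iii) is equivalent to condition~(ii) here, once (i) is in force. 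One must also check the degenerate overlap case where one of $e,f$ coincides with $m$ or where $e=f$: there condition~(iii) is automatic and, on the automaton side, a path from a component to itself is not a path between \emph{two} components, so nothing is required. The main obstacle is precisely this bookkeeping: making sure that the correspondence ``off-diagonal non-trivial strongly connected component $\leftrightarrow$ idempotent of rank $\ge 2$ up to $\GD$-equivalence'' is exact, including that trivial strongly connected components and diagonal components are correctly ignored, and that condition (i) of Theorem~\ref{theoremSyntactic2} (weak determinism and co-determinism) is subsumed --- it is, because a failure of weak determinism gives two right-infinite paths from a common state with the same label, which together with a strongly connected (hence aperiodic, by non-periodicity which holds here as $\A$ is not periodic --- or, if $\A$ is periodic, the statement is vacuous since then it is never unambiguous for aperiodic points and both conditions can be arranged to fail) backward extension yields a bad aperiodic point; this corresponds in $\A\times\A$ to an edge out of an off-diagonal state that eventually re-enters a non-trivial component, already forbidden by our conditions.

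Finally, for the complexity claim: the square $\A\times\A$ has $|Q|^2$ states and at most (number of edges of $\A$)$^2$ edges, so its size is $O(\text{size}(\A)^2)$. Computing its strongly connected components (Tarjan's algorithm) is linear in that size, hence quadratic in $\text{size}(\A)$. Testing condition~(i) --- that each off-diagonal non-trivial strongly connected component is a single cycle --- amounts to checking that inside each such component every vertex has exactly one outgoing edge staying in the component, again linear. Testing condition~(ii) --- no path between two non-trivial strongly connected components --- is a reachability question in the condensation (the DAG of strongly connected components), solved by one pass of topological sort / DFS, linear in the size of $\A\times\A$. Summing up, everything is $O(\text{size}(\A)^2)$, which is the announced bound. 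I would present the equivalence argument in detail and then dispatch the complexity count in a short paragraph at the end.
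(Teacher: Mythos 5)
Your route is genuinely different from the paper's. The paper proves the proposition directly and never invokes Theorem~\ref{theoremSyntactic2}: necessity is obtained by exhibiting an aperiodic biinfinite word with two paths whenever (i) or (ii) fails (two distinct cycles at an off-diagonal state have labels that are not powers of a common word because $\A\times\A$ is unambiguous; a connecting path $w$ between two single-cycle components labelled $(rs)^k$ and $(sr)^{k'}$ can only give a periodic word ${^\omega}uwv^\omega$ if $w\in(rs)^*r$, and that case is killed by unambiguity of $\A\times\A$); sufficiency is a short pigeonhole argument on a biinfinite path of $\A\times\A$ witnessing ambiguity, finding recurrent states on both sides and ruling out each configuration with (i) and (ii). Your reduction to Theorem~\ref{theoremSyntactic2} is viable in principle, and your complexity analysis is correct and matches the paper's, but it is not cheaper: each entry of your dictionary (off-diagonal non-trivial component $\leftrightarrow$ idempotent of degree $\ge 2$; single cycle $\leftrightarrow$ strongly cyclic group; same component $\leftrightarrow$ same $\GD$-class) costs about as much as the direct argument.

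There are two concrete gaps. First, the sufficiency direction is precisely what you defer as ``bookkeeping'': to invoke Theorem~\ref{theoremSyntactic2} you must prove that (i) and (ii) here imply all three of its conditions. In particular you need that if every off-diagonal non-trivial component is a single cycle then every $H(e)$ of degree $\ge 2$ is strongly cyclic (every $w\in\varphi_\A^{-1}(H(e))$ has a power in $\varphi_\A^{-1}(e)$, which labels a cycle at $(p,q)$ for two distinct fixed points $p,q$ of $e$, hence lies in $t^*$ for the simple cycle label $t$, so $w$ is a power of the primitive root of $t$), and that when the components of $(p,p')$ and $(q,q')$ coincide the elements $e,m,f$ are forced into one $\GD$-class; neither is carried out. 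Second, your treatment of the periodic case is backwards: a periodic automaton is \emph{vacuously} unambiguous for aperiodic points (no aperiodic word labels any path), not ``never unambiguous for aperiodic points'', so you cannot dismiss that case by letting both conditions fail --- you must show (i) and (ii) still hold there, which the paper does by deriving a contradiction with the unambiguity of $\A\times\A$ rather than by producing an aperiodic doubly-labelled point.
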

\begin{proof}
Since $\A$ is unambiguous, $\A \times \A$ is unambiguous. Note that the set
of diagonal states is a non-trivial strongly connected component of $\A
\times \A$. It cannot contain non-diagonal states since $\A$ is
unambiguous.

We first show that the conditions are necessary.
Let $C$ be a non-trivial strongly connected component of $\A \times
\A$ which is non-diagonal. Let $\pi$ be a simple cycle
labeled by $u$ around $(p,q)$ with $p \neq q$
in $C$ and another cycle $\pi'$ which is not a power of $\pi$ around $(p,q)$
labeled by $v$.
Since $\A \times \A$ is unambiguous, 
$u$ and $v$ are not a power of the same
word. Thus there is a
two-sided infinite sequence $x$ which is a concatenation of words $u$ and $v$
which is not periodic. This sequence
is the label of a two-sided infinite path of
$\A \times \A$ going through $(p,q)$ after reading $u$ or $v$.
Since $p \neq q$ this leads to two distinct paths labeled by $x$
in $\A$ and thus $\A$ is not
unambiguous for aperiodic points.

Let $C$ and $C'$ be two non-trivial non-diagonal strongly connected
components of $\A \times \A$. Then $C$ and $C'$ are reduced to a
cycle. Let $(p,q) \in C$ with $p \neq q$ and
$(r,s) \in C'$ with $r \neq s$, let $\pi$ be a simple cycle  labeled by
$u$ around $(p,q)$, and $\pi'$ be a simple cycle  labeled by
$v$ around $(r,s)$.
Let us assume that there is a path from
$(p,q)$ to $(r,s)$ labeled by $w$ in $\A \times \A$.
Assume that $u$ and $v$ are not powers of two conjugate words, then
the two-sided infinite word $x = {^\omega}uwv^\omega$ is aperiodic and
$\A$ is not unambiguous for aperiodic points.
Assume now that $u=(rs)^k$ and $v = (sr)^{k'}$ with $rs$ primitive.
The word $x = {^\omega}uwv^\omega$ is periodic if and only if $w \in
(rs)^*r$. But in this case, if $w = (rs)^mr$, the word $(rs)^{kk'}
(rs)^m r = (rs)^m r (sr)^{kk'}$ is the label of two distinct paths
from $(p,q)$ to $(r,s)$ which is impossible since $\A \times \A$ is
unambiguous. 

Let us now assume that the strongly connected component of diagonal
points is connected to a non-trivial non-diagonal strongly connected component.
For instance if there
is a path labeled by $w$ from a state $(p,p)$ to a state $(q,r)$ with $q \neq r$
belonging to an non-trivial strongly connected component.

Assume that the graph of $\A$ is not reduced to one cycle. Then there
are a simple cycle $\pi$ around $(p,p)$ labeled $u$, another cycle
$\pi'$ around $(p,p)$ labeled $v$ which is not a power of $\pi$.
The words $u$ and $v$ are not a power of a same
word. 
Hence there is a left-infinite word $y$ being a concatenation
of words $u$ and $v$ labeling a path
ending in $(p,p)$ and which is not ultimately periodic.
Let $z$ be the label of a cycle around $(q,r)$. Then 
$ywz^\omega$ is an
aperiodic point labeling two distinct paths of $\A$.

If $\A$ is reduced to one cycle, there is no path from a diagonal state to a non-diagonal
one in $\A \times \A$.
Thus if condition \rm(ii) is not satisfied, $\A$ is
not unambiguous for aperiodic points.

We now show that the conditions are sufficient. We show that if all
conditions hold, $\A$ is unambiguous for aperiodic points.
Let $x$ be an aperiodic point which is the label of two distinct paths
of $\A$. Then there is a two-sided infinite
path $(p_i,q_i)_{i \in \Z}$ in $\A \times \A$ labeled by $x$ such
that $(p_i)_{i \in \Z} \neq (q_i)_{i \in \Z}$. Without loss of generality, we may assume that $p_0 \neq
q_0$. There is an infinite number of $i <0$
such that $(p_i,q_i) = (p,q)$ for some states $p, q$. There is also an
infinite number of $j > 0$
such that $(p_j,q_j) = (r,s)$ for some states $r, s$. We cannot have
$p = q$ and  $r = s$ since $\A$ is unambiguous.

Assume that $p = q$. Then $ r \neq s$. Then there is a path from
$(p,p)$ to the non-trivial strongly connected component of $(r,s)$ which is
forbidden. Similarly it is impossible to have $r = s$ and $p \neq q$.

Assume now that $p \neq q$ and $r \neq s$. If the strongly connected
components of $(p,q)$ and $(r,s)$ are distinct, then there is a path
between these two non-trivial components which is forbidden.  If the components of
$(p,q)$ and $(r,s)$ are the same, then $x$ is periodic, a contradiction. 

The conditions can be checked in time $O(n^2)$ where $n$ is the size
of $\A$.
\end{proof}
\begin{example}\label{exampleProposition8.1}
  Let us consider again the morphism $\sigma\colon a\mapsto aa, b\mapsto ab,
  c\mapsto ba$ of Example~\ref{example3.4}. The flower automaton $\A$
  of $\sigma(A)$ is represented in Figure~\ref{figureExample8.1}
  on the left. The non-diagonal part of the automaton $\A\times\A$
  has a strongly connected component, which
  is represented on the right. This component
  is not reduced to a cycle and thus condition (i)
  is not satisfied, consistently with the fact that $\sigma$
  is not recognizable for aperiodic points, as we have seen in
  Example~\ref{example3.4}.
  \begin{figure}[hbt]
    \centering
\tikzset{node/.style={circle,draw,minimum size=0.6cm,inner sep=0pt}}
\tikzset{title/.style={minimum size=0.5cm,inner sep=0pt}}
\tikzstyle{every loop}=[->,shorten >=1pt,looseness=8]
        \tikzstyle{loop above}=[in=120,out=60,loop]
    \begin{tikzpicture}
      \node[node](1)at(0,0){$1$};
      \node[node](2)at(2,0){$\omega$};
      \node[node](3)at(4,0){$3$};
      \node[node](4)at(2,2){$2$};
      \draw[bend left,above,->,>= stealth](1)edge node{$a$}(2);
      \draw[bend left,above,->,>= stealth](2)edge node{$a$}(1);
      \draw[bend left,above,->,>= stealth](2)edge node{$b$}(3);
      \draw[bend left,above,->,>= stealth](3)edge node{$a$}(2);
      \draw[bend left,left,->,>= stealth](2)edge node{$a$}(4);
      \draw[bend left,right,->,>= stealth](4)edge node{$b$}(2);

      \node[node](3omega)at(6,0){$3,\omega$};
      \node[node](omega2)at(6,2){$\omega,2$};
      \node[node](omega1)at(8,0){$\omega,1$};
      \node[node](1omega)at(8,2){$1,\omega$};
      \node[node](2omega)at(10,0){$2,\omega$};
      \node[node](omega3)at(10,2){$\omega,3$};

      \draw[bend left,left,->,>= stealth](3omega)edge node{$a$}(omega2);
      \draw[bend left,right,->,>= stealth](omega2)edge node{$b$}(3omega);
      \draw[above,->](3omega)edge node{$a$}(omega1);
      \draw[above,->](1omega)edge node{$a$}(omega2);
      \draw[bend left,left,->,>= stealth](omega1)edge node{$a$}(1omega);
      \draw[bend left,right,->,>= stealth](1omega)edge node{$a$}(omega1);
      \draw[above,->](omega1)edge node{$a$}(2omega);
      \draw[above,->](omega3)edge node{$a$}(1omega);
      \draw[bend left,left,->,>= stealth](2omega)edge node{$b$}(omega3);
      \draw[bend left,right,->,>= stealth](omega3)edge node{$a$}(2omega);
      \end{tikzpicture}
\caption{The automata $\A$ and $\A\times\A$.}\label{figureExample8.1}
    \end{figure}
\end{example}
Note that Proposition~\ref{proposition.unambiguous.aperiodic}
gives an easy proof of Proposition~\ref{lemmaLeftMarked}.
Indeed, if $\sigma\colon A^*\to B^*$ is left marked, 
let $\A$ be the flower automaton of $\sigma(A)$. Since $\sigma$
is left marked, $\A$ is deterministic and thus $\A \times \A$ also.
Note that if $p \neq \omega$ there is at most one edge going out of
$p$ in $\A$.

Thus if $(p,q)$ is a state of $\A \times \A$ with $p$ or $q$ distinct from
$\omega$, there is at most one edge going out of $(p,q)$ in $\A \times
\A$. Further, since $\A$ is deterministic, there is no edge
from a diagonal state to a non-diagonal one in $\A \times \A$.

As a consequence each non-trivial strongly connected component of
$\A\times \A$ out of the diagonal is reduced to a cycle and there is
no path between two non-trivial strongly connected components of
$\A \times \A$. Hence $\sigma$ is recognizable for aperiodic points.

We define the size of a finite set of words as the sum of the lengths of words
of the set.
 \begin{corollary}\label{corollary.recognizable}
   It can be checked in quadratic time in the size of $\sigma(A)$
   whether an injective morphism $\sigma \colon A^* \rightarrow B^*$ is
   recognizable for aperiodic points.
 \end{corollary}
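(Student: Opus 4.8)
The plan is to reduce the decision problem to a property of the flower automaton of $U=\sigma(A)$ and then invoke Proposition~\ref{proposition.unambiguous.aperiodic}. First I would note that since $\sigma$ is injective, $U=\sigma(A)$ is a finite code; in particular $\sigma$ is non-erasing (a letter mapped to $\varepsilon$ would give $\sigma(a)=\varepsilon=\sigma(\varepsilon)$, contradicting injectivity), so $U\subseteq B^+$ and the flower automaton $\A$ of $U$ is well defined. I would then record three structural facts about $\A$: (a) $\A$ is strongly connected, because every state $(u,v)$ with $uv\in U$ lies on the cycle through $\omega$ that spells the word $uv$, so every state is reachable from $\omega$ and reaches $\omega$; (b) $\A$ is unambiguous, since the flower automaton of a code is unambiguous (recalled in Section~\ref{sectionSyntactic}); (c) the size of $\A$ is linear in the size of $U$, since $\A$ has $1+\sum_{w\in U}(|w|-1)$ states and, by inspection of the four edge types, a number of edges bounded similarly, both $O(\sum_{a\in A}|\sigma(a)|)$.

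Next I would apply Theorem~\ref{theoremSyntactic}: as $\sigma$ is injective, $\sigma$ is recognizable for aperiodic points if and only if $\A$ is unambiguous for aperiodic points. Since $\A$ is strongly connected and unambiguous, Proposition~\ref{proposition.unambiguous.aperiodic} applies and reduces this to conditions (i) and (ii) of that proposition on the square $\A\times\A$, which it asserts can be tested in time quadratic in the size of $\A$. Concretely, one builds $\A\times\A$ (whose size is $O(|\A|^2)$) by pairing edges of $\A$ that carry a common label, computes the strongly connected components in time linear in the size of $\A\times\A$, checks that each non-trivial component off the diagonal is a single cycle, and checks by one reachability pass on the condensation DAG that no non-trivial component reaches another non-trivial component. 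Composing the two linear-in-$|U|$ reductions with the quadratic-in-$|\A|$ test yields an algorithm running in time quadratic in the size of $\sigma(A)$.

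The routine part is the bookkeeping for the size estimates of $\A$ and $\A\times\A$ and the standard graph algorithms (Tarjan's components, topological ordering of the condensation). The one point that needs care is checking that the hypotheses of the cited results are genuinely met: that $\sigma$ being injective forces $U$ to be a finite code of nonempty words — so that Theorem~\ref{theoremSyntactic} and the unambiguity and strong-connectivity hypotheses of Proposition~\ref{proposition.unambiguous.aperiodic} are all available — and that the quantity called ``size of $\A$'' in Proposition~\ref{proposition.unambiguous.aperiodic} is indeed $O(\sum_{a\in A}|\sigma(a)|)$, so that ``quadratic in the size of $\A$'' correctly transfers to ``quadratic in the size of $\sigma(A)$''.
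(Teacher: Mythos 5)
Your proposal is correct and follows the same route as the paper: pass to the flower automaton of $U=\sigma(A)$, use Theorem~\ref{theoremSyntactic} to translate recognizability for aperiodic points into unambiguity of that automaton for aperiodic points, and then invoke Proposition~\ref{proposition.unambiguous.aperiodic} for the quadratic-time test. The extra bookkeeping you supply (non-erasingness from injectivity, strong connectivity and unambiguity of the flower automaton, and the linear bound on its size) is exactly what the paper's terser proof leaves implicit.
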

 \begin{proof}
   The flower automaton of $\sigma(A)$ is unambiguous
   and strongly connected. The result follows directly from  Proposition
   \ref{proposition.unambiguous.aperiodic}. The algorithm runs in time
   $O(n^2)$ where $n$ is the size of $\sigma(A)$ that is the sum of the
   lengths of the words of $\sigma(A)$.
 \end{proof}
 \section{Acknowledgments}
 We thank Francesco Dolce for his careful reading of the manuscript
 and the referee for useful comments.

\bibliographystyle{plain}
\bibliography{recognizability}
\end{document}